\definecolor{hw}{rgb}{0,0,0}
\definecolor{hw1}{rgb}{0,0,0}
\definecolor{hw2}{rgb}{0,0,0}
\definecolor{hw3}{rgb}{1,0,0}
\definecolor{hw4}{rgb}{1,0,0}
\numberwithin{equation}{section}
\newcommand{\R}{{\mathbb R}}
\newcommand{\N}{{\mathbb N}}
\newcommand{\C}{{\mathbb C}}
\newcommand{\s}{{\mathbb S}}
\newcommand{\be}{\begin{eqnarray}}
\newcommand{\ben}{\begin{eqnarray*}}
\newcommand{\en}{\end{eqnarray}}
\newcommand{\enn}{\end{eqnarray*}}
\newtheorem{theorem}{Theorem}[section]
\newtheorem{lemma}[theorem]{Lemma}
\newtheorem{definition}[theorem]{Definition}
\newtheorem{remark}[theorem]{Remark}
\newtheorem{re}[theorem]{Remark}
\definecolor{rot}{rgb}{1.000,0.000,0.000}
\definecolor{blue}{rgb}{0.000,0.000,1.000}
\newcommand{\tcb}{\textcolor{black}}
\newcommand{\tcr}{\textcolor{black}}
\begin{document}
\renewcommand{\theequation}{\arabic{section}.\arabic{equation}}
\begin{titlepage}
\title{\bf \tcb{Piecewise-analytic} interfaces with weakly singular points of arbitrary order always scatter}


\author{
Long Li\thanks{NCMIS and Academy of Mathematics and Systems Science, Chinese Academy of Sciences,
Beijing 100190, China ({\tt longli@amss.ac.cn}).
}
\and Guanghui Hu\thanks{Corresponding author: School of Mathematical Sciences and LPMC, Nankai University, Tianjin 300071, P.R. China ({\tt
ghhu@nankai.edu.cn}).}
\and Jiansheng Yang \thanks{LMAM, School of Mathematical Sciences, Peking University, Beijing 100871,China ({\tt jsyang@pku.edu.cn}).
}
 }
\date{}
\end{titlepage}
\maketitle
\vspace{.2in}

\begin{abstract}
It is proved that an inhomogeneous medium whose boundary contains a weakly singular point of arbitrary order scatters every incoming wave. Similarly, a compactly supported source term with weakly singular points on the boundary always radiates acoustic waves. These results imply the absence of non-scattering energies and non-radiating sources in a domain \tcb{whose boundary is piecewise analytic but not infinitely smooth}. Local uniqueness results with a single far-field pattern are obtained for inverse source and inverse medium scattering problems. \tcr{Our arguments provide a rather weak condition on  scattering interfaces and refractive index functions to guarantee the scattering phenomena that the scattered fields cannot vanish identically.}

\vspace{.2in} {\bf Keywords}: Non-scattering energy; non-radiating source; weakly singular points; inverse medium scattering; inverse source problem; uniqueness.
\end{abstract}

\section{Introduction and main results}
Let $D\subset \R^2$ be a bounded domain such that its exterior $D^e:=\R^2\backslash\overline{D}$ is connected. In this paper, the domain $D$ represents either the support of an acoustic source $s\in L^\infty(\R^2)$ or the support of the contrast function $1-q$ of an inhomogeneous medium. The source term $s$ is called non-radiating if it does not radiate wave fields at infinity. Analogously, if a penetrable obstacle $D$ scatters any incoming wave trivially at the wavenumber $\kappa>0$, then $\kappa$ is called a non-scattering energy. The study of non-scattering energies dates back to \cite{KS} in the case of a convex (planar) corner domain, where the notion of \emph{scattering support} for an inhomogeneous medium was explored.
 The existence of non-radiating sources and non-scattering energies may cause essential difficulties in detecting a target from far-field measurements. It is well known that non-scattering energies and non-radiating sources can be excluded if $\partial D$ contains a curvilinear polygonal/polyhedral corner or a circular conic corner, in other words, corners always scatter; see e.g.,
\cite{BLS, ElHu2015, EH2017, HSV, KS, PSV}. The aforementioned "visible" corners can be interpreted as \emph{strongly singular} points, since the first derivative of the function for parameterizing $\partial D$ is usually discontinuous at these points. In this paper,
the boundary $\partial D$ is supposed to be $C^1$-smooth and piecewise analytic with a finite number of \emph{weakly singular} points.

Throughout the paper we set $\N_0:=\N\cup\{0\}$ and $B_\epsilon(x):=\{y=(y_1,y_2)\in\R^2: |y-x|<\epsilon\}$. Write $B_\epsilon=B_\epsilon(O)$ where $O=(0,0)$ always denotes the origin. Set $\partial_j:=\partial/\partial x_j$ for $j=1,2$. Below we state the definition of a weakly singular point.

\begin{definition}\label{d0}
The point $O\in \partial D$ is called a weakly singular point of order $m\geq 2$ ($m\in \N$) if the subboundary $B_{\epsilon}(O)\cap \partial D$ for some $\epsilon>0$ can be parameterized by the polynomial $x_2= f(x_1), x_1\in(-\epsilon/2,\epsilon/2)$, where
\begin{equation}
\label{eq:6}
f(x_1)=\left\{\begin{array}{lll}
\sum_{n
\in \N_0} \frac{f^{+}_{n}}{n!}x_1^n, &&\quad -\epsilon/2<x_1\leq0, \\
\sum_{n
\in \N_0} \frac{f^{-}_{n}}{n!}x_1^n, &&\qquad 0\leq x_1<\epsilon/2. \end{array}\right.
\end{equation}
Here, the real-valued coefficients $\{f^{\pm}_{n}\}^{\infty}_{n=1}$ satisfy the relations
\[
f^{+}_{l}=f^{-}_{l}:=f_l, \quad\forall\; 0\leq l<m\quad
\mbox{and}\qquad f^{+}_{m}\ne f^{-}_{m},
\]
with $f_{l}=0$ for $l = 0,1$.
\end{definition}
We require $m\geq 2$ in the above definition, because
 a singular point of order one is exactly strongly singular in the sense that $f$ is continuous and the first derivative $f':=df/dx_1$ is discontinuous at $O$.
 Obviously, each planar corner point of a polygon is strongly singular. If $O$ is weakly singular, then $\partial D$ is piecewise analytic but cannot be  $C^\infty$-smooth at this point. The purpose of this paper is to prove that
 \begin{description}
 \item[(i)] An inhomogeneous medium with a weakly singular point of arbitrary order lying on the support $D$ of the contrast function scatters every incoming wave (Theorem \ref{TH}).
 \item[(ii)] A source term embedded in an inhomogeneous medium with a weakly singular point of arbitrary order lying on the support $D$ of the source function always radiates acoustic waves non-trivially (Theorem \ref{th1}).
 \item[(iii)] Local uniqueness results in recovering source terms and the shape of an inhomogeneous medium (Theorems \ref{th2} and \ref{TH2}).
 \end{description}
 The first/second assertion implies the absence of non-scattering energies/non-radiating sources, when the piecewise analytic boundary $\partial D$ contains at least one weakly singular point of arbitrary order. We formulate and remark our main results below.

\subsection{Radiating sources in an inhomogeneous medium}
Consider the radiating of a time-harmonic acoustic source  in an inhomogeneous background medium  in two dimensions. This can be modeled by the inhomogeneous Helmholtz equation
	\be \label{eq:1-cornscatter2018}
		 \Delta v(x) + \kappa^2 \mathfrak{n}(x) v(x) = s(x) \quad \mbox{in} \quad \R^2.
	\en	
In this paper, the potential (or refractive index) function $\mathfrak{n}$ of the inhomogeneous background medium is supposed to be real-analytic in $B_R$ and
	$\mathfrak{n}(x)\equiv1$ in  $|x|>R$ for some $R>0$. The number $\kappa>0$ represents the wavenumber of the homogeneous medium in $|x|>R$ and $s\in L^2(\R^2)$ is a source term compactly supported in $D\subset B_R$. Further, it is supposed that $s=S|_{\overline{D}}$ where $S$ is a real-analytic function defined in a neighborhood of $D$. Since $v$ is outgoing at the infinity, it satisfies the Sommerfeld radiation condition
\begin{equation}\label{eq:radiation}
\lim_{|x|\rightarrow \infty} \sqrt{r}\,\left\{ \frac{\partial v}{\partial r}-i\kappa v \right\}=0,\quad r=|x|,
\end{equation}
uniformly in all directions $\hat x:=x/|x|\in\s:=\{x\in\mathbb{R}^2: |x|=1\}$.
In particular, the Sommerfeld radiation condition (\ref{eq:radiation}) leads to  the asymptotic expansion
\ben
v(x)=\frac{e^{i\kappa r}}{\sqrt{r}}\; v^\infty(\hat x)+\mathcal{O}\left(\frac{1}{r^{3/2}}\right),\quad r\rightarrow+\infty.
\enn
  The function $v^\infty(\hat x)$ is an analytic function defined on $\s$ and is usually referred to as the \emph{far-field pattern} or the \emph{scattering amplitude}.  The vector $\hat{x}\in\s$ is called the observation direction of the far-field pattern.
  Using the variational approach, one can readily prove that the system \eqref{eq:1-cornscatter2018} admits a unique solution in $H^2_{loc}(\R^2)$; see \cite[Chapter 5]{CK} or \cite[Chapter 5]{Cakoni}. Since the far-field pattern encodes information on the source,  we are interested in the inverse problem of recovering the source support $\partial D$ and/or the source term $s(x)$ from the far-field pattern over all observation directions at a fixed frequency.

   The source term $s(x)$ is called  \emph{non-radiating} if $v^\infty$ vanishes identically. For example, setting $s:=(\Delta+\kappa^2\mathfrak{n}(x))\varphi$  for some $\varphi\in C_0^\infty(B_R)$, it is easy to observe that the unique radiating solution to (\ref{eq:1-cornscatter2018}) is exactly $\varphi$, which has the vanishing far-field pattern. Hence, in general a single far-field pattern cannot uniquely determine a source function (even its support), due to the existence of non-radiating sources. In the following two theorems, we shall characterize a class of radiating sources and extract partial or entire information of an analytical source term at a weakly singular point.

  \begin{theorem}[Characterization of radiating sources]\label{th1}
  If $O\in\partial D$ is a weakly singular point such that $|s(O)|+|\nabla s(O)|>0$, then $v^\infty$ cannot vanish identically. Further, the wave field $v$ cannot be analytically continued from $B_R\backslash\overline{D}$ to $B_\epsilon(O)$ for any $\epsilon>0$.
  \end{theorem}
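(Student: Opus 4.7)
I will deduce both assertions from one contradiction argument. Suppose one of them fails: either $v^\infty\equiv 0$, or $v|_{B_R\backslash\overline{D}}$ can be analytically continued to some $B_\epsilon(O)$. My first step is to reduce both situations to the same local boundary value problem. In the first case, Rellich's lemma plus unique continuation through the real-analytic background $\mathfrak{n}$ (using connectedness of $D^e$) will give $v\equiv 0$ in $D^e$; since $v\in H^2_{\rm loc}(\R^2)$, its inside traces on $\partial D$ match the outside ones and hence vanish, so $w:=v|_D$ solves $\Delta w+\kappa^2\mathfrak{n}w=S$ in $D\cap B_\epsilon(O)$ with zero Cauchy data on $\partial D\cap B_\epsilon(O)$. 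In the second case, let $V$ denote the analytic continuation; then $(\Delta+\kappa^2\mathfrak n)V=0$ in $B_\epsilon(O)$, and $w:=v|_D-V|_D$ satisfies the same problem. It will suffice to show that any such $w$ forces $S(O)=\nabla S(O)=0$, contradicting the hypothesis.

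Let $\Gamma^\pm:=\{x_2=f^\pm(x_1)\}$ denote the full analytic extensions of the two smooth boundary arcs meeting at $O$. Interior elliptic regularity with analytic coefficients makes $w$ real-analytic in the interior of $D$. Since the PDE coefficients, the zero Cauchy data, and each curve $\Gamma^\pm$ are analytic, and $\Gamma^\pm$ are non-characteristic for $\Delta$, the Cauchy--Kowalewski theorem will produce analytic functions $\Phi^\pm$ on tubular neighborhoods of $\Gamma^\pm$ solving $\Delta\Phi^\pm+\kappa^2\mathfrak{n}\Phi^\pm=S$ with vanishing Cauchy data on $\Gamma^\pm$. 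Holmgren's uniqueness theorem will identify each $\Phi^\pm$ with $w$ on the $D$-side of $\Gamma^\pm$, so $\Phi^+$ and $\Phi^-$ agree on a non-empty open subset of $D\cap B_\epsilon(O)$; analytic continuation then glues them into a single analytic function $W$ on a disk $B_\delta(O)$ whose Cauchy data vanish on both $\Gamma^+$ and $\Gamma^-$.

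Now I would change coordinates via $y_1=x_1$, $y_2=x_2-f^-(x_1)$, which flattens $\Gamma^-$ to $\{y_2=0\}$ and sends $\Gamma^+$ to $\{y_2=g(y_1)\}$ with
\[
g(y_1)=\frac{f_m^+-f_m^-}{m!}\,y_1^m+O(y_1^{m+1}).
\]
Iterated Weierstrass division against the vanishing data on both arcs will give the factorisation $W=y_2^{\,2}\bigl(y_2-g(y_1)\bigr)^2 h(y_1,y_2)$ with $h$ real-analytic. Since $m\ge 2$ forces $g(0)=g'(0)=0$, direct expansion of this product shows that every Taylor coefficient of $W$ at $O$ of Euclidean total degree at most three vanishes. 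In the new coordinates the equation reads $LW=\widetilde S$ for a second-order elliptic operator $L$ with analytic coefficients (its only non-trivial first-order coefficient at $O$ being $-f_2\,\partial_{y_2}$, since $f_1=0$). Both $LW|_O$ and $\nabla(LW)|_O$ are linear combinations of derivatives of $W$ at $O$ of order at most three, all of which vanish; this yields $\widetilde S(O)=\nabla\widetilde S(O)=0$, equivalently $S(O)=\nabla S(O)=0$, which contradicts $|s(O)|+|\nabla s(O)|>0$. The principal difficulty in executing this plan will be the second step: because $\Gamma^+$ and $\Gamma^-$ are tangent at $O$ to order $m-1\ge 1$, one must check that the two Cauchy--Kowalewski neighborhoods overlap in a genuine open disk around $O$, and that Holmgren's uniqueness can be invoked along each full analytic curve $\Gamma^\pm$ rather than only along the subarc actually lying on $\partial D$. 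Once the common analytic continuation $W$ is secured, the factorisation and the second-order PDE bookkeeping at $O$ are routine.
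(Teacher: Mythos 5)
Your plan is sound and, once the sketched steps are written out, it does prove Theorem~\ref{th1}; but it takes a genuinely different route from the paper. The reduction to the local problem $\Delta w+\kappa^2\mathfrak{n}w=S$ in $D\cap B_\epsilon(O)$ with vanishing Cauchy data on $\partial D\cap B_\epsilon(O)$ is common to both arguments. From there the paper invokes Lemma~\ref{Le:5}, whose proof is the Taylor-coefficient induction of Subsections~\ref{sec:2.3}--\ref{sec:2.4} (Steps 1--3), i.e.\ the combinatorial machinery of Lemmata~\ref{Le:1}--\ref{Le:3}, the index sets of Definitions~\ref{d.1}--\ref{d.3}, and the matrix $M$ with $|M|=-(f_m^+-f_m^-)^2\neq 0$, from which the low-order coefficients $a_{i,j}$ vanish and $s(O)=|\nabla s(O)|=0$ is read off the equation. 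You instead build a two-sided analytic solution near $O$: Cauchy--Kovalevskaya with zero data on each full analytic branch $\Gamma^\pm$, one-sided Holmgren to identify the resulting $\Phi^\pm$ with $w$ along the two subarcs of $\partial D$, gluing into a single analytic $W$ on $B_\delta(O)$ whose Cauchy data vanish on both branches, and then the factorization $W=y_2^2\,(y_2-g(y_1))^2h$ with $g(y_1)=\frac{f_m^+-f_m^-}{m!}y_1^m+O(y_1^{m+1})$, $m\ge 2$, which forces fourth-order vanishing of $W$ at $O$ and hence $S(O)=\nabla S(O)=0$ from the transformed equation. Your argument is shorter and more geometric, and it explains transparently why two distinct analytic branches through $O$ detect the source to first order; the paper's induction is heavier but yields vanishing of \emph{all} Taylor coefficients, which is what Lemma~\ref{D.1} (Theorem~\ref{TH}) and Lemma~\ref{Le:4} (Theorem~\ref{th2}(ii)) require, whereas your factorization delivers exactly the fourth-order vanishing needed for Theorem~\ref{th1} and no more.

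On the step you flag as the principal difficulty: you neither can nor need to invoke Holmgren along the full curves $\Gamma^\pm$, since the Cauchy data of $w$ are unknown off $\partial D$. Apply the one-sided Holmgren theorem (extend $w-\Phi^\pm$ by zero across the arc) only on the open subarcs $\Gamma^+\cap\{x_1<0\}$ and $\Gamma^-\cap\{x_1>0\}$, which gives $w=\Phi^\pm$ on one-sided neighborhoods inside $D$; the overlap is then supplied not by Holmgren but by analyticity: $w$ is real-analytic in $D\cap B_\epsilon$ by analytic elliptic regularity, $D\cap B_\delta(O)=\{x\in B_\delta(O):x_2<f(x_1)\}$ is connected because $f\in C^1$ with $f(0)=f'(0)=0$, and $B_\delta(O)$ lies in both tubular neighborhoods, so the identity theorem yields $w=\Phi^+=\Phi^-$ on $D\cap B_\delta(O)$ and hence $\Phi^+=\Phi^-$ on all of $B_\delta(O)$. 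Two further small points to record when writing this up: the one-sided series in Definition~\ref{d0} automatically converge on a two-sided interval, so the full analytic curves $\Gamma^\pm$ through $O$ exist as you assume; and in your second case the continuation $V$ satisfies the homogeneous equation on all of $B_\epsilon(O)$ because the analytic function $\Delta V+\kappa^2\mathfrak{n}V$ vanishes on the nonempty open set $B_\epsilon(O)\setminus\overline{D}$.
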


  \begin{theorem}[Determination of source term]\label{th2} Assume that $D$ and $\mathfrak{n}$ are both known in advance and that $O\in\partial D$ is a weakly singular point. Then
  \begin{itemize}
  \item[(i)] The far-field pattern $v^\infty$ uniquely determines the values of $s$ and $\nabla s$ at $O$.
  \item[(ii)] Suppose additionally that the source term $s(x)$ satisfies the elliptic equation
\be \label{N:15}
\Delta s(x) + A(x)\cdot \nabla s(x) + b(x) s(x)=0 \ \text{on} \ \overline{D},
\en
where $A(x)\in (L^\infty(B_R))^2$ and $b(x)\in L^\infty(B_R)$ are given functions that are real-analytic around $O$.
Then $s(x)$ can be uniquely determined by $v^\infty$.
\end{itemize}
\end{theorem}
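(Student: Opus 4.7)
For part (i), I reduce to Theorem \ref{th1} applied to the difference of two candidate sources. Suppose $s_1,s_2$ generate the same far-field pattern $v^\infty$; set $\tilde v:=v_1-v_2$ and $\tilde s:=s_1-s_2$. Then $\tilde v$ is Sommerfeld-outgoing with $\tilde v^\infty\equiv 0$ and solves $\Delta\tilde v+\kappa^2\mathfrak n\,\tilde v=\tilde s$ in $\R^2$. Rellich's lemma gives $\tilde v\equiv 0$ outside $\overline{B_R}$, and unique continuation for the Helmholtz operator with real-analytic potential $\mathfrak n$, applied on the connected set $B_R\setminus\overline D$, extends the vanishing to $\tilde v\equiv 0$ on $\R^2\setminus\overline D$. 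Theorem \ref{th1} applied to $\tilde s$ at the weakly singular point $O$ then forces $\tilde s(O)=0$ and $\nabla\tilde s(O)=0$, which is (i).

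For part (ii), the same $\tilde s=s_1-s_2$ still satisfies the elliptic equation \eqref{N:15} (with the same fixed analytic coefficients $A,b$), and part (i) furnishes the base-case vanishing $\tilde s(O)=\nabla\tilde s(O)=0$. The overall strategy is to show that $\tilde s$ vanishes to infinite order at $O$; weak unique continuation for the second-order elliptic operator $\Delta+A\cdot\nabla+b$ will then yield $\tilde s\equiv 0$ on the connected component of $D$ meeting $O$, and real-analyticity of $S_1-S_2$ in a neighborhood of $D$ propagates this to all of $D$.

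The infinite-order vanishing will be proved by induction on $k$, the base $k=1$ being part (i). Assume $\tilde s$ vanishes to order $k\ge 1$ at $O$, so $\tilde s(x)=P_{k+1}(x)+O(|x|^{k+2})$ with $P_{k+1}$ homogeneous of degree $k+1$. Substituting into \eqref{N:15} and comparing the leading (degree $k-1$) contributions forces $\Delta P_{k+1}=0$; hence $P_{k+1}$ lies in the two-dimensional space spanned by $\mathrm{Re}((x_1+ix_2)^{k+1})$ and $\mathrm{Im}((x_1+ix_2)^{k+1})$. To eliminate these two remaining real parameters I return to the scattering side: expand $\tilde v$ and $\tilde s$ in polar coordinates around $O$, plug into $\Delta\tilde v+\kappa^2\mathfrak n\,\tilde v=\tilde s$ to obtain a recursion for the angular coefficients of $\tilde v$, and impose that both $\tilde v$ and $\partial_\nu\tilde v$ vanish along the analytic arcs $\Gamma^\pm:\,x_2=f^\pm(x_1)$. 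Collecting the two conditions that appear at the order controlled by the weakly-singular exponent $m$ produces a $2\times 2$ linear system on the coefficients of $P_{k+1}$ whose determinant is, up to a nonzero factor, $f^+_m-f^-_m$; the non-degeneracy provided by Definition \ref{d0} then forces $P_{k+1}\equiv 0$, closing the induction.

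The main obstacle is executing this recursion rigorously at every step $k$: one must verify that the two conditions distilled from Cauchy-data matching on $\Gamma^\pm$ always reduce to an invertible $2\times 2$ system governed by $f^+_m-f^-_m$, with no accidental cancellations at special values of $(k,m)$. This is the same mechanism that drives Theorem \ref{th1} (whose argument already delivers the base case), but it must now be iterated, which is possible precisely because \eqref{N:15} makes $\tilde s$ real-analytic and reduces its local degrees of freedom to only two per order. Once infinite-order vanishing at $O$ is established, standard weak unique continuation for the second-order elliptic operator finishes the proof.
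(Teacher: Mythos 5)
Your part (i) is correct and is essentially the paper's own argument: the paper also forms the difference of the two radiated fields, uses Rellich's lemma plus unique continuation to see that its Cauchy data vanish on $\partial D\cap B_\epsilon$, and then invokes Lemma \ref{Le:5} (the engine behind Theorem \ref{th1}), which is exactly your contrapositive reduction applied to $\tilde s=s_1-s_2$.

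Part (ii), however, has a genuine gap, and it sits exactly where you flag your ``main obstacle''. The paper's proof of (ii) rests on Lemma \ref{Le:4}, whose proof is the Cartesian Taylor-coefficient induction of Lemma \ref{D.1} run \emph{jointly} on the coefficients $w_{i,j}$ of the wave field and $s_{i,j}$ of the source: vanishing of $w_{i,j}$ up to a given order forces vanishing of $s_{i,j}$ two orders lower via \eqref{N:7}, the $j\ge 4$ coefficients of $w$ are controlled through the fourth-order identity \eqref{N:8} (the analogue of \eqref{eq:r9}), and the decisive $2\times 2$ system with determinant $-(f_m^+-f_m^-)^2$ is a system for wave-field quantities (the analogues of $a_{p-3n,3}$ and $D_p$), not for the two harmonic coefficients of your $P_{k+1}$ directly; the source coefficients are only recovered afterwards from the PDE. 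Your plan instead inducts on the vanishing order of $\tilde s$ alone and asserts that matching $\tilde v$ and $\partial_\nu\tilde v$ on $\Gamma^\pm$ ``at the order controlled by $m$'' yields an invertible $2\times2$ system with determinant proportional to $f_m^+-f_m^-$, ``with no accidental cancellations''. That assertion is precisely the hard content. At a weakly singular point the interface is $C^{m-1}$-flat beyond the common jet, so the discrepancy $f_m^+-f_m^-$ enters the boundary-trace expansions only at orders $m+k$, $2m+k$, $3m+k$, contaminated by products of the shared coefficients $f_n,\dots,f_{m-1}$ and by infinitely many Taylor coefficients of $\tilde v$; isolating the $f_m^\pm$-contribution and showing the contaminating sums vanish is the content of Lemmas \ref{Le:1}--\ref{Le:3}, with the index sets of Definitions \ref{d.1}--\ref{d.3} and the chain argument over $\digamma_j$, $\digamma_{j,1}$. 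The polar-coordinate/angular-recursion mechanism you invoke is the one used for genuine corners (opening angle $\ne\pi$); here the boundary is $C^1$, and the paper explicitly replaces it by the Cartesian machinery for that reason. So the inductive step of your argument is not a routine verification to be ``executed rigorously'' at the end: it is the main technical theorem of the paper, and your proposal neither proves it nor correctly identifies the quantities on which the nondegeneracy $f_m^+\ne f_m^-$ acts. Once that lemma is granted, your concluding steps (strong/weak unique continuation for $\Delta+A\cdot\nabla+b$ near $O$ and analytic/UCP propagation to all of $D$) match the paper and are fine.
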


The admissible source functions satisfying (\ref{N:15}) process the property that the lowest order Taylor expansion at $O$ is harmonic (see \cite{HL}), that is, for some $N\in \N_0$,
\ben \label{eq:hform1-cornscatter2018}
		s(x) = r^N\left(A \cos (N \theta) + B \sin (N\theta)\right) + \mathcal O(r^{N+1}), \quad |x| \to 0,\, x \in B_\epsilon(O).
		\enn
Theorems \ref{th1} and \ref{th2} have generalized the results of \cite{HL, Bl2018} for planar corners to the case of arbitrarily weakly singular points (in the sense of Definition \ref{d0}), under the analytical assumptions imposed on $n$ and $s$. Without these a priori assumptions,
one can prove uniqueness by using multi-frequency near/far field data; we refer to \cite{AM, EV} for the uniqueness proof in a homogeneous background medium and to \cite{BLT,CIL} for increasing stability estimates in terms of the bandwidth of frequencies.

\subsection{Absence of non-scattering energies}
Let $D\subset \R^2$ be a bounded penetrable
scatterer embedded in a homogeneous isotropic background medium. The acoustic properties of $D$ can be characterized by the refractive index function $q\in L^2_{loc}(\R^2)$ such that $q\equiv 1$ in $D^e$ after some normalization. Hence
the contrast function $1-q$ is compactly supported in $D$. Assume that a time-harmonic non-vanishing incoming wave $u^{in}$ is incident onto $D$, which is governed by the Helmholtz equation
$(\Delta +\kappa^2) u^{in}=0$ at least in a neighborhood of $D$. For instance, $u^{in}$ is allowed to be a plan wave, a Herglotz wave function or a cylindrical wave  emitting from a source position located in $\R^2\backslash\overline{D}$.
  The wave propagation of the total field $u=u^{in}+u^{sc}$ is then modeled by the Helmholtz equation
\ben
\Delta u+\kappa^2 q\, u=0\quad\mbox{in}\quad \R^2.
\enn
 At the infinity, the perturbed scattered field
 $u^{sc}$ is supposed to fulfill the Sommerfeld radiation condition (\ref{eq:radiation}).
The unique solvability of the above medium scattering problem in $H^2_{loc}(\R^2)$ is well known (see e.g., \cite[Chapter 8]{CK}). We suppose that $q$ is real-analytic on $\overline{D}$, that is, there exists a real-analytic function $Q$ defined in a neighborhood of $D$ such that $Q|_{\overline{D}}=q$. Further, we suppose that $|q(O)-1|+|\partial_1q(O)|>0$ for each weakly singular point $O$ lying on $\partial D$, because of the medium discontinuity.
For instance, $q(x)=q_0+q_1 x_1+q_2x_2$ on $\overline{D}$ where $q_0,q_1, q_2\in \R$ satisfying $|q_0-1|+|q_1|>0$.
This covers at least the piece-wise constant case that $q|_{\overline{D}}\equiv q_0\neq 1$.
We shall prove that
\begin{theorem}[Weakly singular points always scatter]\label{TH}
The penetrable scatterer $D\subset \R^2$ scatters every incoming wave, if $\partial D$ contains at least one weakly singular point $O$ (see Def. \ref{d0}).  Further, $u$ cannot be analytically continued from $\R^2\backslash\overline{D}$ to $B_\epsilon(O)$ for any $\epsilon>0$.
\end{theorem}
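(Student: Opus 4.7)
The plan is to argue by contradiction. Assume $u^{sc}\equiv 0$, so that $u\equiv u^{in}$ on $D^e$. The $H^2_{loc}(\R^2)$ regularity of $u$ then forces the trace and normal trace of $u$ on $\pa D$, computed from inside $D$, to coincide with those of $u^{in}$. Setting $w:=u-u^{in}$ inside $D$ and extending it by zero to $D^e$, one obtains $w\in H^2_0(D)$ satisfying
\[
\Delta w+\ka^2 q\, w=\ka^2(1-q)u^{in}\quad\mbox{in }D,\qquad w=\pa_\nu w=0\quad\mbox{on }\pa D.
\]
Both the coefficient $q$ and the right-hand side $(1-q)u^{in}$ are real-analytic in a neighborhood of $\overline D$. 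The aim is to show that this is incompatible with $u^{in}\not\equiv 0$ whenever $\pa D$ contains a weakly singular point $O$.

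The analysis localises at $O$. Near $O$ the boundary consists of two real-analytic arcs $\ga^{+}=\{x_2=f^{+}(x_1),\,x_1<0\}$ and $\ga^{-}=\{x_2=f^{-}(x_1),\,x_1>0\}$ meeting tangentially to order $m-1$ at $O$, with $f^+_m\ne f^-_m$. By elliptic regularity for equations with real-analytic coefficients and real-analytic boundary, $w$ extends real-analytically up to the interior of each arc $\ga^\pm\setminus\{O\}$. The zero Cauchy data on each arc, combined with the PDE, recursively determine the (formal) Taylor coefficients of $w$ at $O$: the recursion obtained by expanding $w(x_1,f^+(x_1))=\pa_\nu w(x_1,f^+(x_1))=0$ is parametrised by $\{f^+_n\}$, while the analogous recursion obtained from the $x_1>0$ side is parametrised by $\{f^-_n\}$. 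Since $f^+_n=f^-_n$ for $n<m$, the two recursions coincide at low orders; the mismatch $f^+_m\ne f^-_m$ then injects, at each sufficiently high order, a non-trivial linear combination of derivatives of $(1-q)u^{in}$ at $O$ that must vanish for both recursions to produce the same Taylor coefficients of $w$.

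Iterating these compatibility relations forces every Taylor coefficient of $(1-q)u^{in}$ at $O$ to vanish, and real-analyticity then yields $(1-q)u^{in}\equiv 0$ in a neighborhood of $O$. Since $|q(O)-1|+|\pa_1 q(O)|>0$, the factor $1-q$ has a non-trivial Taylor expansion of order at most $1$ at $O$ and is therefore non-zero on an open subset of any neighborhood of $O$; consequently $u^{in}$ vanishes on an open set, and Holmgren's unique continuation theorem forces $u^{in}\equiv 0$, contradicting the assumption of a non-trivial incident wave. For the non-continuation statement, suppose $u$ admitted an analytic extension $\tilde u$ to some $B_\epsilon(O)$. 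Then $\tilde u$ agrees with $u^{in}+u^{sc}$ on the open set $B_\epsilon(O)\cap D^e$, where it satisfies $(\Delta+\ka^2)\tilde u=0$; analyticity propagates this to $(\Delta+\ka^2)\tilde u\equiv 0$ on all of $B_\epsilon(O)$. Comparing with $(\Delta+\ka^2 q)u=0$ inside $D\cap B_\epsilon(O)$, the difference $v:=u-\tilde u$ has zero Cauchy data on $\pa D\cap B_\epsilon(O)$ and satisfies $\Delta v+\ka^2 q\,v=\ka^2(1-q)\tilde u$; the local version of the matching argument above then forces $(1-q)\tilde u\equiv 0$ near $O$, hence $\tilde u\equiv 0$ on $B_\epsilon(O)$ by unique continuation and the hypothesis on $q$. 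Propagating across $D^e$ gives $u\equiv 0$ on $D^e$, which forces $u^{sc}=-u^{in}$ and contradicts the Sommerfeld radiation condition on the non-trivial incident wave.

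The principal obstacle is the Taylor matching step at $O$: one has to verify that the weakly singular discrepancy $f^+_m\ne f^-_m$ genuinely delivers a non-degenerate hierarchy of compatibility conditions on the derivatives of $(1-q)u^{in}$ at $O$, rather than one that collapses to trivial identities and leaves coefficients unconstrained. Tracking the combinatorics of two formal Taylor expansions that first diverge at order $m$, and showing that the induction closes at every order so that all derivatives of $(1-q)u^{in}$ at $O$ are eliminated, is the technical heart of the proof.
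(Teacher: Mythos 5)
Your overall strategy coincides with the paper's (reduce to a local Cauchy problem at the weakly singular point, match the two one-sided Taylor recursions generated by $f^{+}$ and $f^{-}$, exploit $f_m^{+}\neq f_m^{-}$ together with the jump of the potential at $O$, then conclude by unique continuation), and your decomposition $w=u-u^{in}$ with source $\kappa^2(1-q)u^{in}$ is essentially a relabelling of the paper's system (\ref{transmission})--(\ref{eq:t1}). However, there is a genuine gap at the very start of your matching step: you expand $w(x_1,f^{\pm}(x_1))=\partial_\nu w(x_1,f^{\pm}(x_1))=0$ in powers of $x_1$ at $0$, which presupposes that $w$ (equivalently $u|_D$) admits a Taylor expansion at the corner point $O$. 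Interior analyticity of $u$ in $D$ plus analytic extension across each open arc $\gamma^{\pm}\setminus\{O\}$ does not give this: the region of analyticity may degenerate as one approaches $O$, and an $H^2_0(D)$ function has no well-defined ``formal Taylor coefficients'' there, so the two recursions you want to compare are not even defined. The paper removes this obstruction before any expansion is written: since $u^{sc}\equiv0$, the Cauchy data of $u$ on $\Gamma$ are those of the real-analytic field $u^{in}$, and the Cauchy--Kovalevskaya theorem (with Holmgren/unique continuation to identify the extension with $u$ in $D$) extends $u$ analytically to a full ball $B_\epsilon(O)$; only then are the identities (\ref{eq:4})--(\ref{eq:i2}) and the expansions (\ref{eq:3}) legitimate. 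The same extension step is missing in your argument for the non-continuation assertion, where you again expand $v=u-\tilde u$ at $O$ although only $\tilde u$ is known to be analytic there.

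The second gap is that you defer precisely the step that constitutes the paper's proof. Asserting that the mismatch $f_m^{+}\neq f_m^{-}$ ``injects a non-degenerate hierarchy of compatibility conditions'' and that ``the induction closes at every order'' is the content of Lemma \ref{D.1} together with Lemmas \ref{Le:1}--\ref{Le:3}: the vanishing is proved by a joint induction on the coefficients $a_{i,j}$ of the difference field and $b_{i,j}$ of the (extended) total field, with the transfer from the $a$'s to the $b$'s effected through $\Delta w+q_1w=(q_1-q_2)w_2$ and the hypothesis $|q(O)-1|+|\partial_1 q(O)|>0$; the two cases $q_1(O)\neq q_2(O)$ and $q_1(O)=q_2(O)$, $\partial_1(q_1-q_2)(O)\neq0$ need different bookkeeping, the case $n=m$ needs the separate treatment of Remark \ref{n=m}, and the key non-degeneracy is the determinant $|M|=-(f_m^{+}-f_m^{-})^2\neq0$ extracted from the two expansions of $w|_\Gamma$ and $\partial_2w|_\Gamma$. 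In particular, your claim that the compatibility relations bear directly on the Taylor coefficients of $(1-q)u^{in}$ oversimplifies this structure; what the induction actually yields is $a_{i,j}=b_{i,j}=0$ for all $i,j$, i.e.\ $u\equiv u^{in}\equiv0$ near $O$, after which your concluding unique-continuation and radiation-condition arguments are fine. As it stands, the proposal is a correct outline of the paper's approach, but the two essential ingredients --- the analytic extension to a neighborhood of $O$ and the combinatorial induction --- are not supplied.
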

As a by-product of the proof of Theorem \ref{TH},
 we get a local uniqueness result to the shape identification with a single incoming wave.
 \begin{theorem}\label{TH2} Let $D_j$ ($j=1,2$) be two penetrable scatterers in $\R^2$ with the analytical potential functions $q_j$, respectively.  If $\partial D_2$ differs from $\partial D_1$ in the presence of a weakly singular point lying on the boundary of the unbounded component of $\R^2\backslash\overline{(D_1\cup D_2)}$, then the far-field patterns corresponding to $(D_j,q_j)$ incited by any non-vanishing incoming wave cannot coincide.
\end{theorem}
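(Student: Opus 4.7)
I would argue by contradiction and reduce the statement to the non-continuation assertion of Theorem \ref{TH}. Assume $u_1^\infty\equiv u_2^\infty$ on $\s$ for a common non-vanishing incoming wave $u^{in}$, and let $\Omega^*$ denote the unbounded connected component of $\R^2\backslash\overline{D_1\cup D_2}$. The difference $u_1^{sc}-u_2^{sc}$ satisfies $(\Delta+\kappa^2)(u_1^{sc}-u_2^{sc})=0$ in $\Omega^*$, fulfills the Sommerfeld radiation condition, and has zero far-field pattern. Rellich's lemma together with unique continuation along the (connected) set $\Omega^*$ then forces $u_1^{sc}=u_2^{sc}$, and hence $u_1=u_2$, throughout $\Omega^*$.

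Let $O$ be the weakly singular point supplied by the hypothesis. Because $O\in\partial\Omega^*$, a point lying in $\text{int}(D_1)$ cannot qualify, so after possibly swapping the roles of $D_1$ and $D_2$ I may assume $O\in\partial D_2$ and $O\notin\overline{D_1}$. Choose $\epsilon>0$ so small that $B_\epsilon(O)\cap\overline{D_1}=\emptyset$. Then $q_1\equiv 1$ on $B_\epsilon(O)$, so $u_1$ satisfies the standard Helmholtz equation $(\Delta+\kappa^2)u_1=0$ on $B_\epsilon(O)$ and is therefore real-analytic on this ball. The identity $u_1=u_2$ on $\Omega^*\cap B_\epsilon(O)$ now produces an analytic extension of $u_2$ from $\R^2\backslash\overline{D_2}$ into the full neighborhood $B_\epsilon(O)$ of the weakly singular point $O\in\partial D_2$. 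Under the standing medium-discontinuity assumption $|q_2(O)-1|+|\partial_1 q_2(O)|>0$, the second assertion of Theorem \ref{TH} precludes any such extension, which is the desired contradiction.

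The main obstacle I anticipate is the borderline configuration where the distinguishing point $O$ happens to lie on both $\partial D_1$ and $\partial D_2$, for instance when the two boundaries touch tangentially at $O$. In this case neither total field is automatically analytic in a full ball around $O$, and the reduction of the previous paragraph fails on the face of it. To handle this scenario one must either extract, from the hypothesis that $\partial D_2$ differs from $\partial D_1$ at $O$, a one-sided sector in which one of the two potentials is locally trivial and then recover analyticity of the corresponding total field across the relevant arc of $\partial(D_1\cup D_2)$, or else pass to the difference $u_1-u_2$ on the overlap region and transfer the weakly singular geometric data to the elliptic equation it satisfies. In both approaches the only non-routine input remains Theorem \ref{TH}, and the remaining steps reduce to a local geometric analysis of $\partial(D_1\cup D_2)$ near $O$.
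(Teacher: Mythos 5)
Your proposal is correct and follows essentially the same route as the paper, whose entire proof of Theorem \ref{TH2} is the observation that it follows from the second assertion of Theorem \ref{TH}: you make this precise exactly as intended, via Rellich's lemma and unique continuation to get $u_1=u_2$ on the unbounded component, and then use the analyticity of $u_1$ in a ball around $O$ disjoint from $\overline{D_1}$ to produce the forbidden analytic continuation of $u_2$. The borderline configuration you flag ($O\in\partial D_1\cap\partial D_2$) is excluded under the intended reading of the hypothesis (the weakly singular point is a point at which $\partial D_2$ differs from $\partial D_1$, hence lies on $\partial D_2\setminus\partial D_1$, and being on the boundary of the unbounded component it cannot lie in the interior of $D_1$), and the paper's own one-line proof does not treat that case either.
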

 Here we mention the connection of Theorems \ref{TH} and \ref{TH2} with a cloaking device. \tcb{ The latter always leads to vanishing observation data and is closely related to uniqueness in inverse scattering.}
  It follows from Theorem \ref{TH} that a cloaking device cannot be designed by homogeneous and isotropic medium with a weakly singular point lying the boundary surface. There are essential difficulties in our attempt to prove the global uniqueness with a single far-field pattern. To the best of our knowledge, such kind of global uniqueness for shape identification remains open for a long time, \tcb{ since Schiffer's first result using infinitely many plane waves in 1967 (see \cite{Lax}). Theorem \ref{TH2} has partly answered this open question.}

 The second assertions in Theorems \ref{th1} and \ref{TH} imply that, the wave field must be "singular" (that is, non-analytic) at the weakly singular points.
 Excluding the possibility of analytical extension turns out to be helpful in designing non-iterative inversion algorithms for locating planar corners; see e.g., the enclosure method \cite{Ik1999,Ik2000}, the
 one-wave version of range test approach  \cite{KPS, KS} and no-response test method \cite{LNPW,P2003,PS2010} as well as the data-driven scheme recently prosed in \cite{EH2019,HL}.   Most of these inversion schemes can be interpreted as domain-defined sampling methods (or analytic continuation tests, see \cite[Chapter 15]{NP2013} for detailed discussions), in comparison with the pointwise-defined sampling approaches such as Linear Sampling Method \cite{Cakoni}, Factorization Method \cite{Kirsch08} and Point Source Method \cite{Potthast} etc. Combining the ideas of \cite{KPS, KS, EH2019} with our results, one may conclude that arbitrarily weakly singular points lying on the convex hull of $D$ can be numerically reconstructed from the data of a single far-field pattern.

  In our previous work \cite{LHY}, the analogue results to Theorems \ref{TH2} and \ref{TH} were verified in a piece-wise constant medium where the locally parameterized boundary function $f$ takes the special form (cf. (\ref{eq:6}))
  \begin{align*}
f(x_1)=\left\{\begin{array}{lll}
f_j^+\, x_1^{j}, &&\quad \epsilon/2>x_1\geq 0,\\
f_n^-\, x_1^{n},&& -\epsilon/2<x_1\leq 0,
\end{array}\right.\quad j, n\in \N_0,\quad f_j^+, f_n^-\in \R,
\end{align*}
 with the conditions
 \[
 j,n\geq 2, \quad (f_j^+, j)\neq (f_n^-, n),\quad (f_j^+)^2+(f_n^-)^2\neq 0.
 \]
 Obviously, the weakly singular points and potential functions considered in this paper  are more general than those in \cite{LHY}. In fact, the mathematical techniques and algebraic calculations in the present paper are more subtle and intricate than \cite{LHY}.

For strongly singular corners \cite{EH2017,HSV}, the smoothness of the potential function can be even weakened  to be H\"older continuous with a lower contrast to the background medium (that is, $1-q$ is $C^{k}$-smooth at $O$ for some $k\geq 2$). Using additionally involved arguments, our approach can also handle the lower contrast case. However, we only consider the higher contrast medium fulfilling the condition $|q(O)-1|+|\partial_1 q(O)|>0$, since the emphasis of this paper is placed upon treating interfaces with weakly singular points of arbitrarily order $m\geq 2$.

 \vspace*{12pt}

\tcr{In the recent article \cite{Cav}, it is revealed that in $\R^n$ ($n=2,3$), if the Lipschitz boundary $\partial D$ processes a non-analytic point $O$ and if $q$ is analytic in a neighborhood of $O$, then the object $D$ scatters the incoming wave $u^{i}$, provided that $(q(O)-1)u^i(O) \ne 0$. The authors have also proved the same result under a weaker regularity assumption on $q$ ($C^{m,\alpha}$-smooth) but with a strong regularity assumption on $\partial D$ ($C^{m+1,\alpha}$-smooth) near $x_0$.
The approach in \cite{Cav} relies on free boundary methods.  Such a method has been also employed in \cite{SaSh} to show that penetrable scatterers with real-analytic boundaries admit an incident wave such that it doesn't scatter. Under the non-vanishing condition of the incident wave, a necessary condition on a boundary point was derived for the non-scattering phenomena.} However, nothing is known for general smooth domains without the non-vanishing condition on the incident wave and the contrast $q-1$. Our results illustrate that piecewise analytic penetrable scatterers (at least $C^1$-smooth but not $C^{\infty}$-smooth) scatter any incoming waves, under the analytic assumption on $q$ together with the medium discontinuity assumption $|q-1|+|\partial_1 q|>0$ at the weakly singular point. Moreover, we establish a local uniqueness result for shape reconstruction with a single incoming wave.

 The remaining part is organized as follows. Our main efforts will be spent on an analytical approach to the proof of Theorem \ref{TH}
in Section \ref{sec:WSS}. This also yields the local uniqueness result of Theorem \ref{TH2}. In Section \ref{sec:Source}, we shall adapt this approach to prove Theorems \ref{th1} and \ref{th2}. The proofs of some important Lemmata will be given in the Appendix.

\section{Weakly singular points always scatter}\label{sec:WSS}

\subsection{Proofs of Theorems \ref{TH} and \ref{TH2}.}
This section is devoted to the proofs of Theorems \ref{TH} and \ref{TH2} when the penetrable scatterer $\partial D$ contains a weakly singular point on the boundary.  For this aim, we first generalize the Cauchy-Kovalevski theorem for the Helmholtz equation to a piecewise analytic domain.

\begin{lemma}\label{nle}
Let $D$ be a domain in $\R^2$ and suppose that the boundary $\partial D$ in an $ \epsilon$-neighborhood of $O\in \partial D$ can be represented by $\Gamma=\{(x_1,f(x_1)): x_1\in(-\epsilon/2,  \epsilon/2)\}$, where $f$ is given by (\ref{eq:6}). Let $\widetilde u$ satisfies
\ben
&\Delta  \widetilde u + \widetilde q(x) \widetilde u = 0 \quad\mbox{in}\quad D \cap B_{ \epsilon},\\
&\widetilde u = \widetilde g_0, \; \partial_\nu \widetilde u = \widetilde  g_1   \quad\mbox{on}\quad \Gamma,
\enn
where $\widetilde q = \widetilde Q|_{\overline D}, \widetilde g_0 = \widetilde G_0|_\Gamma, g_1 = \widetilde G_1|_\Gamma$ with $\widetilde Q, \widetilde G_0$ and $\widetilde G_1$ being analytic in $B_\epsilon$.  Then, there exits $ \epsilon _1 \in (0, \epsilon)$ such that $\widetilde u$ can be extended analytically from $\overline{D}\cap B_{ \epsilon_1}$ to $B_{ \epsilon_1}$ and the extended function $\widetilde u_1$ satisfies that
\begin{align*}
\Delta \widetilde u_1 + \widetilde Q(x) \widetilde u_1 = 0  \quad\mbox{in}\quad  B_{ \epsilon_1}.
\end{align*}
\end{lemma}
\begin{proof}
Since $\Gamma$ is piecewise analytic, we may extend one of its analytic components to a well-defined  analytic function over the interval $(-\epsilon_1/2, \epsilon_1/2)$ for some $\epsilon_1\in(0, \epsilon)$ sufficiently small.  Define $\Gamma_1: = \{(x_1, f^+(x_1)): x_1\in(- \epsilon_1/2, \epsilon_1/2)\}$ with
\begin{align*}
f^+(x_1) = \sum_{n
\in \N_0} \frac{f^{+}_{n}}{n!}x_1^n, &\quad -\epsilon_1/2<x_1\leq \epsilon_1/2.
\end{align*}
Here, $\{f^{+}_{n}\}_{n\in \N_0}$ is given by (\ref{eq:6}).
By the Cauchy-Kovalevski theorem (see e.g. \cite[Chapter 3.3]{John}), the Cauchy problem of the Helmholtz equation
\begin{align*}
&\Delta \widetilde u_1 + \widetilde Q(x) \widetilde u_1 = 0  \quad \qquad\quad\mbox{in}\quad  B_{ \epsilon_1}, \\
&\widetilde u_1 = \widetilde G_0|_{\Gamma_1}, \; \partial_\nu \widetilde u_1 = \widetilde  G_1|_{\Gamma_1}   \quad\mbox{on}\quad \Gamma_1
\end{align*}
admits only a unique analytic solution $\widetilde u_1$. Note that
\begin{align*}
&\Delta (\widetilde u_1- \widetilde u) + \widetilde q(x) (\widetilde u_1- \widetilde u) = 0  \quad\mbox{in}\quad  D \cap  B_{ \epsilon_1},\\
&  \widetilde u_1 = \widetilde u, \quad \partial_\nu \widetilde u_1 = \partial_\nu \widetilde u,  \quad  \qquad \quad\mbox{on}\quad \Gamma^+ \cap B_{ \epsilon_1},
\end{align*}
where $\Gamma^+ = \{(x_1,f(x_1))\in \Gamma: 0<x_1<\epsilon/2\}$. Then,
by Holmgren's theorem, it can be deduced that $\widetilde u_1 (x) = \widetilde u (x)$ for $x \in  D \cap B_{ \epsilon_1}$.  Therefore, $\widetilde u_1$ can be considered as an analytical extension of $\widetilde u$ from $\overline{D}\cap B_{ \epsilon_1}$ to $B_{ \epsilon_1}$. The proof of this lemma is thus completed.
\end{proof}

Since the Helmholtz equation remains invariant by coordinate translation and rotation, in the following analysis, we can suppose without loss of generality that the weakly singular point always coincides with the origin. Assume that the boundary $\partial D$ in an $\epsilon$-neighborhood of $O$ can be represented by $\Gamma=\{(x_1,f(x_1)): x_1\in(-\epsilon/2,\epsilon/2)\}$, where
the function $f$ is given by (\ref{eq:6}). Assuming that $u^{sc}$ vanishes in $D^e$, we shall derive a contradiction.
Across the interface $\partial D$, we have the continuity of the total field and its normal derivative, 
\be\label{TE1}
u^+=u^-,\quad \partial_\nu u^+=\partial_\nu u^-\quad\mbox{on}\;\partial D.
\en
Here the superscripts $(\cdot)^\pm$ stand for the limits taken from outside and inside, respectively, and $\nu\in \s$ is the unit normal on $\partial D$ pointing into $D^e$.
Since $u^{sc}=0$ in $D^e$,  the Cauchy data of $u$ on $\Gamma$ coincide with those of $u^{in}$, which are real-analytic since the incoming wave fulfills the Helmholtz equation near $D$.
Observing the fact that $q=Q|_{\overline{D}}$ is analytic on $\overline{D}$ and that $\Gamma$ is piecewise analytic, applying the Lemma \ref{nle}, one may analytically extend $u$ from
 $\overline{D}\cap B_\epsilon$ to a small neighborhood of $O$ in
 $D^e\cap B_\epsilon$.  For notational convenience, we still denote the extended domain by $B_\epsilon$ and the extended function by $u$,  satisfying the Helmholtz equation
\ben
\Delta u+\kappa^2 Q(x) u=0\quad\mbox{in}\quad B_\epsilon.
\enn
In the subsequent sections we take $\epsilon=1$ for simplicity (see Figure \ref{fig:1}).
Using the transmission condition (\ref{TE1}) together with $u^{sc}\equiv 0$ in $D^e$, we deduce that
\be\label{transmission}\left\{\begin{array}{lll}
\Delta w_j+q_j(x)w_j=0, &\mbox{ in}\quad B_1,\quad j=1,2,\\
w_1=w_2,\quad\partial_\nu w_1= \partial_\nu w_2&\mbox{ on}\quad \Gamma,
\end{array}\right.\en
where
\be\label{setting}
w_1=u^{in},\quad w_2=u,\qquad q_1(x)\equiv \kappa^2,\quad q_2(x)=\kappa^2 Q(x).
\en
\begin{figure}[htbp]
			\centering
			\includegraphics[width = 3.8in]{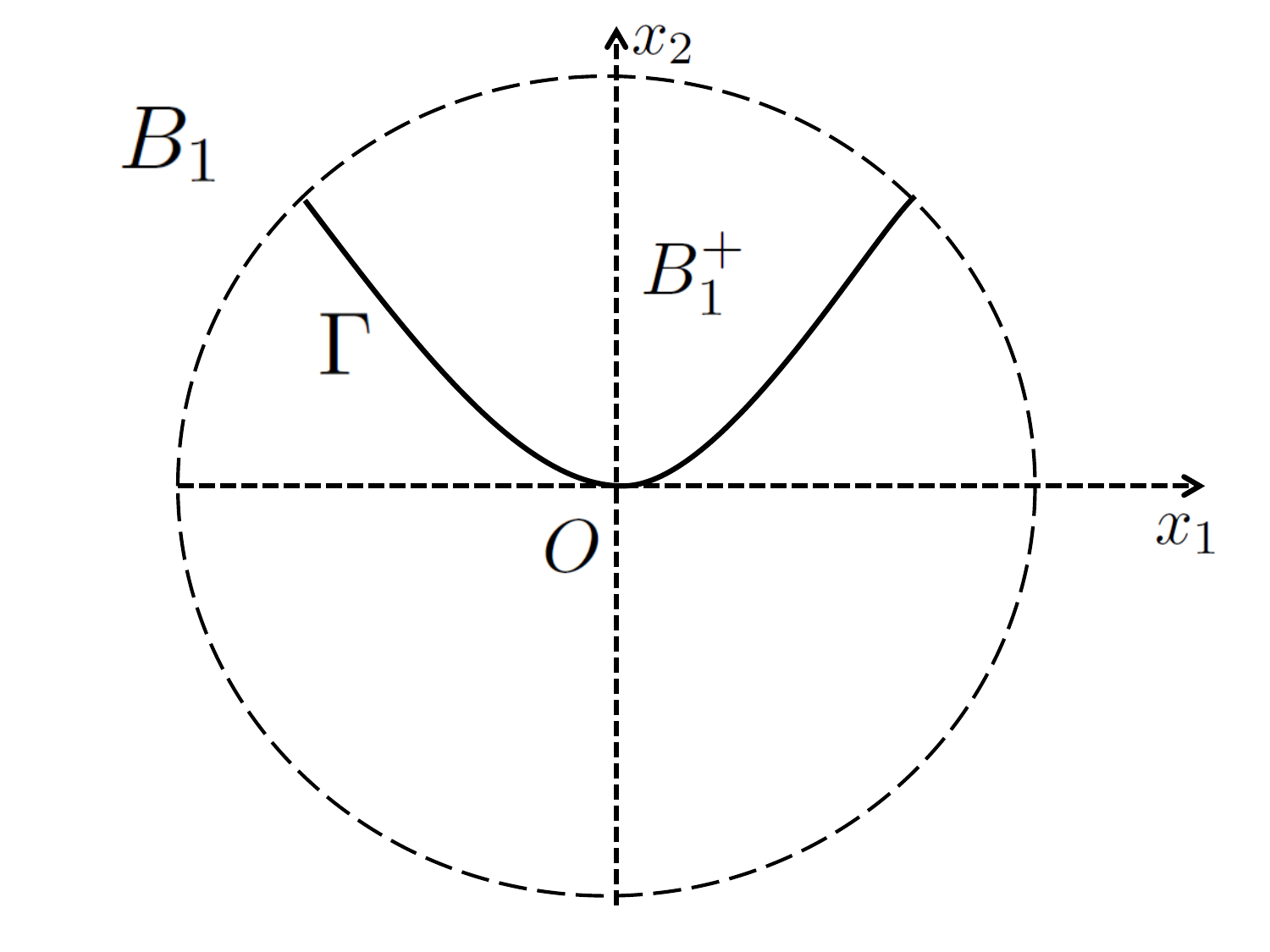}
			\caption{Illustration of the piecewise analytic interface $\Gamma\subset \R^2$ which contains a weakly singular point at $O=(0,0)$.}
			\label{fig:1}
		\end{figure}
We shall prove
\begin{lemma} {\label{D.1}}
Let $q_1$ and $q_2$ be  real-analytic functions defined in $B_1$.
  Suppose that $w_j\in H^2(B_1)$ ($j=1,2$) are solutions to (\ref{transmission}) and $O\in\Gamma$ is a weakly singular point with the local parametrization of the form (\ref{eq:6}). If
  \be\label{q}
  |(q_1-q_2)(O)|+|\partial_1(q_1-q_2)(O)|> 0
  \en
   then  $w_1=w_2\equiv0$ in $B_1$.
  \end{lemma}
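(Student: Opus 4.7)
The plan is to analytically extend $w_2$ across $\Gamma$ near the weakly singular point $O$, form the difference $v := w_1 - w_2$ on the joined neighborhood, exploit the two distinct analytic branches of $\Gamma$ at $O$ to factor $v$ quadratically along each branch, and then extract a contradiction from the coupled Taylor expansions of $v$ and $w_2$ at $O$ together with the nondegeneracy hypothesis (\ref{q}). I would first let $f_R, f_L$ denote the full analytic extensions of the two one-sided power series in (\ref{eq:6}) and set $\Gamma_R := \{(x_1, f_R(x_1))\}$, $\Gamma_L := \{(x_1, f_L(x_1))\}$. Because $q_1, q_2$ are real-analytic in $B_1$ and both $\Gamma_R, \Gamma_L$ are noncharacteristic analytic curves for $\Delta + q_2$, the Cauchy--Kovalevski theorem applied to the Cauchy data inherited from $w_1$ on $\Gamma\cap\{x_1>0\}$ (resp.\ on $\Gamma\cap\{x_1<0\}$) extends $w_2$ analytically across each side, and by unique continuation within $D\cap B_1$ the two extensions define a single real-analytic function $W_2$ in an open neighborhood $U$ of $O$ solving $\Delta W_2 + q_2 W_2 = 0$.

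Setting $v := w_1 - W_2$ on $U$, one has $v$ analytic, $\Delta v + q_1 v = (q_2-q_1) W_2$, and $v = \partial_\nu v = 0$ on $\Gamma \cap U$. Continuing these zero identities analytically in $x_1$ from $\{x_1>0\}$ and from $\{x_1<0\}$ promotes them to $v = \partial_\nu v = 0$ on the entire analytic curves $\Gamma_R$ and $\Gamma_L$. A real-analytic division argument applied first on $\Gamma_R$ and then on $\Gamma_L$ (using that $f_L - f_R = O(x_1^m)$ is not identically zero, so the factor $(x_2-f_R)^2$ extracted first is invertible along $\Gamma_L$ except at $O$) gives the factorization
\[
v(x_1, x_2) = (x_2 - f_R(x_1))^2\,(x_2 - f_L(x_1))^2\,\widetilde{U}(x_1,x_2)
\]
for some real-analytic $\widetilde{U}$ on $U$, so in particular $v$ vanishes to order at least four at $O$.

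Expanding every analytic function at $O$ into homogeneous polynomial parts, I would read the equation $\Delta v + q_1 v = (q_2-q_1) W_2$ degree by degree. The degree-$0$ and degree-$1$ identities, combined with the two alternatives $(q_2-q_1)(O)\ne 0$ or $(q_2-q_1)(O) = 0$ with $\partial_1(q_2-q_1)(O)\ne 0$, force $W_2(O) = \nabla W_2(O) = 0$. At degree two the equation reduces, after these cancellations, to
\[
12\,\widetilde{U}(O)\,x_2^2 \;=\; \Delta v_4 \;=\; (q_2-q_1)_0\,W_{2,2} + (q_2-q_1)_1\,W_{2,1},
\]
which together with the degree-$0$ constraint $\Delta W_{2,2} = 0$ from $\Delta W_2 + q_2 W_2 = 0$ (or, in the second regime, a direct algebraic matching of the homogeneous quadratic on the right) forces $\widetilde{U}(O) = 0$ and lifts the vanishing order of $v$ at $O$ to at least five. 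Iterating the same bookkeeping at each successive degree---alternately using the equation for $v$ to kill a Taylor coefficient of $W_2$ and the Helmholtz equation for $W_2$ to kill a Taylor coefficient of $\widetilde{U}$, with the discrepancy $f_m^+ - f_m^- \ne 0$ entering the degree-$(m+3)$ coefficients of $(x_2-f_R)^2(x_2-f_L)^2\widetilde{U}$ and preventing a degenerate stall in the recursion---I expect every Taylor coefficient of $W_2$ at $O$ to vanish. Real-analyticity of $W_2$ then gives $W_2 \equiv 0$ in $U$, and the resulting vanishing Cauchy data for $w_1$ on $\Gamma\cap U$ combined with Holmgren's uniqueness theorem and analytic unique continuation propagates $w_1 \equiv 0$ throughout $B_1$, whence $w_2\equiv 0$ as well.

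The main obstacle is closing this induction at every homogeneous degree: the Taylor coefficients of $\widetilde{U}$ (which encode $v$) and of $W_2$ (which are constrained by their own Helmholtz equation) are coupled through the whole equation $\Delta v + q_1 v = (q_2-q_1) W_2$, and one must verify that no degenerate cancellation allows these coefficients to be nonzero past some order. The weakly singular order $m$ is precisely what rules out such a cancellation, and carrying out the matching on a unified footing in both hypothesis regimes while tracking exactly where the discrepancy $f_m^+-f_m^-$ first enters the algebra is the technical heart of the argument.
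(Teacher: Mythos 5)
Your factorization framework is sound and genuinely different from the paper's route: promoting the vanishing Cauchy data of $v=w_1-w_2$ from the two half-branches of $\Gamma$ to the full analytic curves $\Gamma_R,\Gamma_L$ is legitimate (one-variable analyticity), and the double division $v=(x_2-f_R)^2(x_2-f_L)^2\widetilde{U}$ with $\widetilde{U}$ analytic is correct, the second division being exactly where $f_m^+\ne f_m^-$ is used (it makes $f_L-f_R\not\equiv 0$, consistent with the failure of the lemma for analytic $\Gamma$). Your low-degree bookkeeping (degrees $0,1,2$ forcing $W_2(O)=0$, $\nabla W_2(O)=0$, $\widetilde{U}(O)=0$ in both regimes of (\ref{q})) also checks out. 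The paper instead never factors $v$: it matches one-sided Taylor coefficients of $w|_\Gamma$ and $\partial_2 w|_\Gamma$ against (\ref{eq:t1}) and the fourth-order identity (\ref{eq:r9}), organized through the combinatorial index sets of Definitions \ref{d.1}--\ref{d.3} and Lemmata \ref{Le:1}--\ref{Le:3}.

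The gap is that the induction, which is the entire content of the lemma, is asserted rather than proved. ``Iterating the same bookkeeping \dots I expect every Taylor coefficient of $W_2$ at $O$ to vanish'' is not an argument: at a general degree, under the inductive hypothesis $\widetilde{U}_j=0$ for $j<J$ and vanishing of the lower $W_2$-coefficients, the matching reduces to a coupled homogeneous linear system, namely $(q_2-q_1)(O)\,W_{2,J+2}=\Delta\bigl(x_2^4\widetilde{U}_J\bigr)$ with $W_{2,J+2}$ harmonic in case (i), and $\Delta\bigl(x_2^4\widetilde{U}_J\bigr)=\bigl(\partial_1(q_2-q_1)(O)x_1+\partial_2(q_2-q_1)(O)x_2\bigr)W_{2,J+1}$ with $W_{2,J+1}$ harmonic in case (ii); what must be shown is that these systems admit only the trivial solution, e.g.\ that no nonzero homogeneous polynomial divisible by $x_2^4$ is biharmonic (case (i), via the Goursat representation), respectively a leading-coefficient comparison in $x_1$ killing the two free coefficients of the harmonic factor (case (ii)). You supply no such nondegeneracy argument and yourself flag it as ``the technical heart'' left undone. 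Moreover, the mechanism you point to is off: once the double factorization is in hand, the discrepancy $f_m^+-f_m^-$ plays no further role, since under the inductive hypothesis the higher homogeneous parts of $(x_2-f_R)^2(x_2-f_L)^2$ drop out of the leading-degree identity; so chasing ``the degree-$(m+3)$ coefficients preventing a degenerate stall'' would not produce the closure. With the injectivity statements above your route can indeed be completed, and arguably more cleanly than the paper's combinatorial induction, but as written the proposal stops exactly where the proof has to be done.
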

\tcb{Lemma \ref{D.1} implies that the Cauchy data of two Helmhotz equations cannot coincide on a piecewise analytic curve with a weakly singular point, if the involved analytical potentials are not identical at this point. The result of Lemma \ref{D.1} is not valid if $q_1\equiv q_2$ near $O$. It also implies that, solutions to the Helmholtz equation $\Delta w_1+q_1(x) w_1=0$ in $B_1^+:=\{x\in B_1: x_2>f(x_1)\}$ (see Figure \ref{fig:1})
cannot be analytically continued into $B_1$, if the Cauchy data of $w_1$ coincide with those of $w_2$ on $\Gamma$. Hence, Lemma \ref{D.1} gives a sufficient condition of the boundary under which solutions to the Helmholtz equation admits no analytical extension. This is in contrast with the Cauchy-Kowalevski theorem (see e.g., \cite[Chapter 3.3]{John}) which guarantees a locally analytical extension of analytic solutions if both the Cauchy data and boundary surface are analytic. It seems that this local property of the Helmholtz equation also extends to other elliptic equations such as the Lam\'e system.
}

Based on Lemma \ref{D.1}, we can readily prove   Theorems \ref{TH} and \ref{TH2}.

{\bf Proof of Theorems \ref{TH} and \ref{TH2}.}
Let $O\in \partial D$ be a weakly singular point.
  We first note that the jump condition (\ref{q}) applies to the potentials given in (\ref{setting}), since $|q(O)-1|+|\partial_1q(O)|> 0$.
  By Lemma \ref{D.1} and the unique continuation, $u^{in}$ vanishes identically in $\R^2$ if $u^{sc}$ vanishes identically, which contradicts our assumption.
  Hence, an analytical potential with a weakly singular point lying on the boundary of the contrast function's support always scatters.
  This proves the first assertion of Theorem \ref{TH}. If the total field $u$ can be analytically continued from $\R^2\backslash\overline{D}$ to $B_\epsilon(O)$ for some $\epsilon>0$, one can get the same system as (\ref{transmission}), where $w_1$ is now replaced by the extension of $u$ satisfying  the Helmholtz equation with the wavenumber $\kappa^2$. By Lemma \ref{D.1} we get $u\equiv 0$, implying that $u^{sc}$ can be extended to an entire radiation solution to the Helmholtz equation. Hence we obtain the vanishing of $u^{sc}$ and thus also the vanishing of $u^{in}$, which is impossible. This proves the second assertion of Theorem \ref{TH} by applying Lemma \ref{D.1}. The local uniqueness result of Theorem \ref{TH2} follows directly from the second assertion of Theorem \ref{TH}.

  \begin{remark}
   Lemma \ref{D.1} does not hold true if  the curve $\Gamma$ is analytic at $O$. Counterexamples can
be constructed when $\Gamma$ is a line segment or a circle (see  \cite[Remark 3.3]{EH2017}, \cite[Section 4]{LHY} and \cite{CPJ}).
\tcr{ We conjecture that Lemma \ref{D.1} remains valid even under the weaker assumptions: \begin{description}
 \item[] {\rm Conjecture: Let $\Gamma:=\{x_2=f^\pm(x_1),\; x_1\lessgtr 0\}$ where $f^\pm$ are real-analytic functions for
 $x_1\lessgtr 0$. Suppose that $\Gamma$ is $C^\infty$-smooth but not real-analytic at $O$.
  Suppose further that $q_1\neq q_2$ are two (complex) constants and that $w_j$ are two solutions to \eqref{transmission}. Then it holds that $w_1=w_2\equiv 0$ in $B_1$. }
 \end{description}}
In the present paper we only consider a piecewise analytic interface which is not $C^\infty$-smooth at $O$. The proof for $C^\infty$-smooth boundaries with a  non-analytical point requires novel mathematical arguments.
  \end{remark}

\subsection{Important Lemmata}
The proof of Lemma \ref{D.1} will be given in Subsections \ref{sec:2.3}  and \ref{sec:2.4}. In this subsection we prepare several import Lemmata to be used in the proof of Lemma \ref{D.1}.

Setting $w:= w_1-w_2$, it is easy to obtain
\begin{equation}
\label{eq:t1}
\Delta w + q_1 w= (q_1-q_2)w_2 \quad \text{in} \ \   B_1
\end{equation}
subjects to the vanishing Cauchy data
\be\label{Cauchy-data}
 w=\partial_\nu w = 0  \ \ \text{on}\ \Gamma.
\en
It follows from (\ref{Cauchy-data})
that
\begin{equation}
\label{eq:4}
w(x_1,f(x_1))=0,
\end{equation}
\begin{equation}
\label{eq:i2}
\partial_2w(x_1,f(x_1))=0,
\end{equation} for all $x_1\in(-1/2,1/2)$.

Since the potentials $q_j$ are real-analytic, the solutions $w_j$ and $w$ are all analytic functions in $B_1$. Hence,  $w$ and $w_2$ can be expanded into the Taylor series
\be
\label{eq:3}
w(x)=\sum_{i,j\ge 0}\frac{a_{i,j}}{i!j!}x_1^ix_2^j,
\qquad 
w_2(x)=\sum_{i,j\ge 0}\frac{b_{i,j}}{i!j!}x_1^ix_2^j, \quad x=(x_1,x_2) \in B_1.
\en
The Taylor expansion of $\partial_2w$ can be written as
\be
 \label{eq:g3}
\partial_2w(x_1,x_2)={\color {hw}{\sum_{i,j\ge 0}\frac{a_{i,j+1}}{i!j!}x_1^ix_2^j,\qquad x=(x_1,x_2) \in B_1}.}
\en
\tcb{The above Taylor expansions in the Cartesian coordinate system turns out to be convenient in dealing with weakly singular points (see also \cite{LHY}). The corresponding expansions in polar coordinate were used in \cite{ElHu2015} for treating planar corners.} {\color{hw2}Inserting (\ref{eq:6}) into (\ref{eq:3}) and (\ref{eq:g3}), we may rewrite $h(x_1):=w(x_1, f(x_1))$ and $g(x_1):=\partial_2w(x_1, f(x_1))$ as
\be\label{h}
h(x_1)= \sum_{i,j\in \N_0} \frac{a_{i,j}}{i!\, j!} x_1^i [f(x_1)]^j =\left\{\begin{array}{lll}
\sum_{l
\in \N_0} \frac{h^{+}_{l}}{l!}x_1^l, &&\quad -1/2<x_1\leq0, \\
\sum_{l
\in \N_0} \frac{h^{-}_{l}}{l!}x_1^l, &&\quad 0\leq x_1<1/2, \end{array}\right.
\en
and
\be\label{g}
g(x_1)= \sum_{i,j\in \N_0} \frac{a_{i,j+1}}{i!\, j!} x_1^i [f(x_1)]^j =\left\{\begin{array}{lll}
\sum_{l
\in \N_0} \frac{g^{+}_{l}}{l!}x_1^l, &&\quad -1/2<x_1\leq0, \\
\sum_{l
\in \N_0} \frac{g^{-}_{l}}{l!}x_1^l, &&\quad 0\leq x_1<1/2, \end{array}\right.
\en
respectively.}
It then follows from (\ref{eq:4}) and (\ref{eq:i2}) that
\be\label{E1}
h_l^\pm=g_l^\pm=0\quad\mbox{for all}\quad l\in \N_0.
\en
Lemma \ref{D.1} will be proved with the help of (\ref{eq:t1}), (\ref{eq:3}),(\ref{E1}) and the following identity
\be\nonumber
&&\Delta \Delta w +q_1\Delta w + 2\nabla q_1\cdot\nabla w + \Delta q_1 w \\
\label{eq:r9}&=&-(q_1-q_2)q_2w_2+ 2\nabla(q_1-q_2)\cdot\nabla w_2+ \Delta(q_1-q_2)w_2
\en
in $B_1$, which was obtained by taking $\Delta$ on both sides of (\ref{eq:t1}).
In fact, from these relations we shall deduce through the induction argument and the weekly singularity at $O$ (more precisely, $f_m^+\neq f_m^-$) that $a_{i,j}=b_{i,j}=0$ for all $i,j\in\N_0$, which imply the vanishing of $w_1$ and $w_2$ by analyticity.

Let $m\in\N$ ($m\geq 2$) be the order of the singular point $O$ specified in Definition \ref{d0}.  We can always find a number $n\in \N, 2\leq n\leq m$ such that
\be\label{n}
f_{l}=0\quad&\mbox{for all}\quad 0\leq l<n;\qquad
f_n\neq 0,\qquad\mbox{if}\quad n<m.
\en
To prove Lemma \ref{D.1}, we first consider the case of $n<m$. If $n=m$, the proof can be proceeded analogously (see Remark \ref{n=m}).
Since $2\leq n<m$ and $m\geq 2$, we have
\be\label{eq:60}
x_2=f(x_1)=\sum_{j=n}^{m-1} \frac{f_j}{j !}x_1^j+\left\{\begin{array}{lll}
\sum_{j\geq m} \frac{f_j^+}{j !}x_1^j,&&\mbox{if}\quad x_1>0,\\
\sum_{j\geq m} \frac{f_j^-}{j !}x_1^j,&&\mbox{if}\quad x_1<0,
\end{array}\qquad f_{m}^+\neq f_{m}^-.\right.
\en
Below we state two important lemmata , whose proof will be postponed to the appendix. The first one describes the relation between the coefficients $h_l^\pm$, $g_l^\pm$ and $a_{i,j}$ for $0\leq l<3n-1$. Recall from and \eqref{h} and \eqref{eq:60} that
\be\label{h2}
h(x_1)=\sum_{i,j\in \N_0} \frac{a_{i,j}}{i !\,j !} x_1^i\left( \sum_{l=n}^{m-1} \frac{f_l}{j!} x_1^l  +   \sum_{l=m}^{\infty} \frac{f_l^\pm}{j!} x_1^l  \right)^j=\sum_{l\in \N_0} \frac{h_l^\pm}{l\, ! } x_1^l\quad \mbox{for}\quad x_1\lessgtr 0.
\en
\begin{lemma}\begin{itemize}
\item[(i)] For $0\leq l\leq n-1$, we have
\be\label{E2}
h_l^\pm=\frac{a_{l,0}}{l\,!},\quad g_l^\pm=\frac{a_{l,1}}{l\,!}.
\en
\item[(ii)] For $n\leq l\leq 2n-1$, we have
\be\label{E3}
h_l^\pm=\frac{a_{l,0}}{l\,!}+\sum_{j=0}^{l-n} \frac{a_{l-n-j,1}}{(l-n-j)\,!}\frac{f^\pm_{j+n}}{(j+n)\,!},\\ \label{E4}
g_l^\pm=\frac{a_{l,1}}{l\,!}+\sum_{j=0}^{l-n} \frac{a_{l-n-j,2}}{(l-n-j)\,!}\frac{f^\pm_{j+n}}{(j+n)\,!}.
\en
\item[(iii)] For $2n\leq l\leq 3n-1$, we have
\be\label{E5}
h_l^\pm=\frac{a_{l,0}}{l\,!}+\sum_{j=0}^{l-n} \frac{a_{l-n-j,1}}{(l-n-j)\,!}\frac{f^\pm_{j+n}}{(j+n)\,!}+\sum_{j=0}^{l-2n} \frac{a_{l-2n-j,2}}{(l-2n-j)\,!}
\left(\sum_{i=0}^j \frac{f^\pm_{i+n}\; f^\pm_{j-i+n}}{(i+n)!\;(j-i+n)\,!} \right)
\en
and $g_l^\pm$ takes the same form as $h_l^\pm$ with $a_{\cdot, j}$ replaced by $a_{\cdot, j+1}$.
\end{itemize}
\end{lemma}
The above lemma follows directly by equating the coefficients for $x_1^l$ on both sides of \eqref{h2} and using the fact that $[f(x_1)]^j=O(x_1^{nj})$.  Writhe the index $l=i+nj$ for$ i, j, n \in \N_0$. Then, the relation $0\leq l\leq n-1$ implies $j=0, i=l$ in the first assertion; the relation $n\leq l\leq 2n-1$ implies $j=0, i=l$ or $j=1, i=l-n$ in the second assertion; the relation
$2n\leq l\leq 3n-1$ implies $j=0, i=l$, or  $j=1, i=l-n$, or $j=2, i=l-2n$ in the third assertion.
These results will be used in justifying the initial steps of the induction hypothesis.
It is not necessary to calculate $h_l^\pm$ for all $l\in \N_0$. In our induction arguments, we need the following definition and lemma.

\begin{definition}\label{d}
Let $n$ be given by (\ref{n}). For $a\in \N_0$ and $\alpha \in \{0,1,2\}$, it is said that the  pair  $(i,j)\in  \N_0 \times  \N_0$ with $i\le a$ and $j\ge \alpha$ belongs to the index set $\Upsilon_{\alpha}^a$ if either $(i,j)=(a,\alpha)$ or there exit some $d \in \N$ and two sequences of positive integers ${\{i_k\}}^d_{k=1}, {\{j_k\}}^d_{k=1}$ with $i_k\ge n$ such that
\begin{equation}\label{e1}
i+\sum^{d}_{k=1} i_kj_k=a,\qquad \sum^{d}_{k=1}j_k=j-\alpha.
\end{equation}
\end{definition}

\begin{lemma}\label{Le:1}
 Suppose that the coefficients in the Taylor expansion of $w$ (see (\ref{eq:3})) fulfill the relations
 \begin{align}\label{er}
 a_{i,j}=0\qquad\mbox{if}\quad i+jn\le k-1+3n\quad\mbox{or} \quad i+j \le k+2,\; j>3,
 \end{align}
 for some $k\in \N$. Then the relation $g(x_1)\equiv 0$ for all $x_1\in(-1/2,1/2)$ (see (\ref{g}) for the definition of $g$) implies that 
\be
\label{eq:16}
\frac{{(f^{\pm}_m)}^2}{{(m!)}^2}\frac{3a_{k,3}}{k!3!}+\frac{{f^{\pm}_m}}{{m!}} 2D_{k,1}+D_{k,2}=0.
\en


The relation $h(x_1)=0$ for all $x_1\in(-1/2,1/2)$ implies that
\be
\label{eq:15}
\frac{{(f^{\pm}_m)}^3}{{(m!)}^3}\frac{a_{k,3}}{k!3!}+\frac{{(f^{\pm}_m)}^2}{{(m!)}^2} D_{k,1}
+\frac{f_{\pm}^{m}}{{m!}}\, D_{k,2}
+ D_{k, 3}=0.
\en
Here $D_{k, l}\in \C$ \tcr{($l=1,2,3$)} depends on $a_{i,j}$ with $(i,j) \in \Upsilon^{lm+k}_{3-l}$ and only on the coefficients $f_j$ of $f(x_1)$ with $j<m$.
\end{lemma}

%
%





To prove Lemma \ref{D.1} when $n<m$, we need to consider two cases:

Case (i): $q_1(O) \ne q_2(O)$;

Case (ii): $q_1(O)=q_2(O), \partial_{1}q_1(O) \ne \partial_{1}q_2(O)$.

\subsection{Proof of Lemma \ref{D.1} in Case (i): $q_1(O) \ne q_2(O)$.}\label{sec:2.3}
 For simplicity, we shall divide our proof into four steps. Recall again from \eqref{E1} that $h_l^\pm$ and $g_l^\pm$ vanish for all $l\in \N_0$.

\textbf{Step 1}: First, it follows from the expressions of $h_l^\pm$ and $g_l^\pm$ for $0\leq l\leq n-1$ (see \eqref{E2}) that
\[ a_{i,0}=0,\qquad0\leq i\leq n-1.
 \] and
\begin{equation}\label{t1}
a_{i,1}=0, \qquad 1\le i \le n-1.
\end{equation}
{\color{hw} Inserting \eqref{t1} into \eqref{E3}
yields
\[
a_{i+n,0}=0, \qquad 0\le i\le n-1.
\]}
To summarize above, we get
\ben
a_{i,j}=0, \qquad  i+jn\leq 2n-1.
\enn

 \textbf{Step 2}: We shall prove $a_{i,j}=0, 2n\leq i+jn \le 3n-1$ and $b_{0,0}=0$.

Using the results of Step 1 and the expression of $g(x_1)$, we get
\be\label{E6}
0=\sum_{i\geq n} \frac{a_{i, 1}}{ i\, !} x_1^i + \left(\sum_{i\geq 0} \frac{a_{i, 2}}{ i\, !\; } x_1^i\right)\; \left( \sum_{l=n}^{m-1} \frac{f_l}{l!} x_1^l  +   \sum_{l=m}^{\infty} \frac{f_l^\pm}{l!} x_1^l   \right)+\cdots.
\en
Equating the coefficients of $x_1^{\gamma}$ with $m\leq \gamma\leq n+m-1$, we have
\begin{align*}
\eta_{\gamma}+\frac{1}{({\gamma}-m)!}\frac{f_{m}^{+}}{m!}a_{{\gamma}-m,2}=\eta_{\gamma} +\frac{1}{({\gamma}-m)!}\frac{f_{m}^{-}}{m!}a_{{\gamma}-m,2},
\end{align*}
where $\eta_{\gamma}$ depends on $a_{i, j}$ and $f_l$ with $l<m$.
Utilizing the condition that $f_{m}^{+}\ne f_{m}^{-}$, we have
\begin{align*}
a_{{\gamma}-m,2}=0\quad \mbox{for any}\quad m\leq \gamma\leq n+m-1,
\end{align*}
implying that $a_{l, 2}=0$ for all $0\leq l\leq n-1$.
Now, using \eqref{E4} we get
$a_{l, 1}=0$ for all $n\leq l\leq 2n-1$. Together with \eqref{E5}, this gives
$a_{l, 0}=0$ for all $2n\leq l\leq 3n-1$.
Now we conclude that $a_{i,j}=0$ for all $i,j$ such that $2n\leq i+jn \le 3n-1.$

The results in the first two steps give rise to $a_{0,0}=a_{2,0}=a_{0,2}=0$, implying that $w(O)=\Delta w(O)=0$.
Since $q_1(O) \ne q_2(O)$, it is deduced from (\ref{eq:t1})  that $w_2(O)=0$, that is,
\begin{equation}
\label{T:5}
b_{0,0}=0.
\end{equation}
%

\textbf{Step 3}: In this step, we will prove 
\be\label{N:10}\begin{split}
a_{i,j}=0\quad &\mbox{if}\quad 3n \le i+jn\le 3n+1 \quad\mbox{or}\quad  i+j = 4,\\
b_{i,j}=0\quad &\mbox{if}\quad  1 \le i+j \le 2.
\end{split}
\en
We first  consider the case of $i+nj=3n$.  
It is deduced from Steps 1-2 that that coefficients $a_{i,j}$ satisfy the assumption of Lemma \ref{Le:1} with $k = 0$. Taking $k=0$ in  (\ref{eq:16})  and (\ref{eq:15}), we obtain
\begin{align}
\label{eq:45}
&{(f^{\pm}_m)}^3\frac{a_{0,3}}{3!{(m!)}^3}+\frac{{(f^{\pm}_m)}^2}{{(m!)}^2} D_{0,1}+ \frac{f_{\pm}^{m}}{{m!}}D_{0,2} +D_{0,3}=0, \\
\label{eq:46}
&{(f^{\pm}_m)}^2\frac{3a_{0,3}}{3!{(m!)}^2}+2\frac{{f^{\pm}_m}}{{m!}}D_{0,1}+D_{0,2}=0,
\end{align}
Since $f_m^+\neq f_m^-$, we deduce from
(\ref{eq:46}) and (\ref{eq:45}) that
\be\label{T:6}
{(f^{+}_m)}^2\frac{3a_{0,3}}{3!{(m!)}^2}
+2\frac{{f^{+}_m}}{{m!}}D_{0,1}&=& {(f_m^{-})}^2\frac{3a_{0,3}}{3!{(m!)}^2}
+2\frac{{f_m^{-}}}{{m!}}D_{0,1},
\\ \label{T:7}
{2(f^{+}_m)}^3\frac{a_{0,3}}{3!{(m!)}^3}+\frac{{(f^{+}_m)}^2}{{(m!)}^2} D_{0,1}&=& {2(f_m^{-})}^3\frac{a_{0,3}}{3!{(m!)}^3}+\frac{{(f_m^{-})}^2}{{(m!)}^2} D_{0,1}.
\en
Combing (\ref{T:6}) and (\ref{T:7}) gives the algebraic equations for $a_{0,3}$ and $D_{0,1}$,
\ben 
&&{(f^{+}_m+f_{m}^{-}})\frac{3a_{0,3}}{3!{m!}}+2{D_{0,1}}=0,
\\ \label{T:9}
&&\frac{2a_{0,3}}{3!{m!}}({(f_{m}^{+})}^2
+{(f_{m}^{-})}^2+{f_{m}^{+}}{f_{m}^{-}})
+D_{0,1}({f_{m}^{+}}+{f_{m}^{-}})=0,
\enn
which can be written in the matrix form
\be
M\begin{bmatrix}\frac{a_{0,3}}{3!{m!}} \\ D_{0,1}\end{bmatrix}=0,\quad
M=:
\begin{bmatrix}
{3(f_{m}^{+}+f_{m}^{-})}, & 2 \\
2({(f_{m}^{+})}^2+{(f_{m}^{-})}^2+{f_{m}^{+}}{f_{m}^{-}}), &({f_{m}^{+}}+{f_{m}^{-}})
\end{bmatrix}.
\en
Since $|M| =-(f^{+}_m-f_m^{-})^2\ne 0$, we obtain
\be\label{T:10}
a_{0,3}=0.
\en
Now, the expression of $g$ can be rephrased in $x_1\lessgtr 0$  as (cf. \eqref{E6})
\begin{align*}
0=&\sum_{i\geq 2n} \frac{a_{i, 1}}{ i\, !} x_1^i + \left(\sum_{i\geq n} \frac{a_{i, 2}}{ i\, !} x_1^i\right)\; \left( \sum_{l=n}^{m-1} \frac{f_l}{l!} x_1^l  +   \sum_{l=m}^{\infty} \frac{f_l^\pm}{l!} x_1^l   \right)\\
&+\left(\sum_{i\geq 0} \frac{a_{i, 3}}{ i\, !\; 2 !} x_1^i\right)\; \left( \sum_{l=n}^{m-1} \frac{f_l}{l!} x_1^l  +   \sum_{l=m}^{\infty} \frac{f_l^\pm}{l!} x_1^l   \right)^2+\cdots.
\end{align*}
where we have used the results of Step 2.
Making the difference for the coefficients of $x_1^{n+m}$,  we get
\begin{align*} 
\frac{a_{0,3}}{2!}\frac{f_{n}}{n!}\frac{f_{m}^{+}}{m!}+\frac{a_{n,2}}{n! 2!}\frac{f_{m}^{+}}{m!}=\frac{a_{0,3}}{2!}\frac{f_{n}}{n!}\frac{f_{m}^{-}}{m!}+\frac{a_{n,2}}{n! 2!}\frac{f_{m}^{-}}{m!}.
\end{align*}
Utilizing the fact $f_{m}^{+}\ne f_{m}^{-}$, we get
\begin{align*}
\frac{a_{0,3}}{2!}\frac{f_{n}}{n! 2!}+\frac{a_{n,2}}{n! 2!}=0.
\end{align*}
This together with (\ref{T:10}) and $f_n\neq 0$ yields that
\be\label{T:11}
a_{n,2}=0.
\en
Furthermore,  by  equating the coefficient of $x_1^{2n}$ we get
\begin{equation*}
\frac{a_{0,3}}{2!(n!)^2}{(f_{n})}^2
+\frac{a_{n,2}}{(n!)^2}f_{n}
+\frac{a_{2n,1}}{(2n)!}=0.
\end{equation*}
On the other hand,  equating the coefficients of $x_1^{3n}$ in the expression of $h(x_1)$ leads to
\begin{equation*}
\frac{a_{0,3}}{3!{n!}^3}{(f_{n})}^3+\frac{a_{n,2}}{2{(n!)}^3}{(f_{n})}^2
+\frac{a_{2n,1}}{(2n)!n!}f_n+\frac{a_{3n,0}}{(3n)!}=0.
\end{equation*}
Combining (\ref{T:10}),(\ref{T:11}) and the previous two identities leads to
$a_{n,2}=a_{3n,0}=0$. This proves the first relations for $a_{i,j}$ with $i+jn=3n$ appearing in (\ref{N:10}), i.e.,
\be\label{T:15}
a_{0,3}=a_{n,2}=a_{2n,1}=a_{3n,0}=0.
\en
Recall from the first two steps that $a_{0,3}=a_{2,1}=a_{1,2}=0$. Hence, $\partial_j\Delta w(O)=0$ for $j=1,2$. Taking $\partial_j$ to both sides of (\ref{eq:t1}) and using the fact that
$q_1(O)\ne q_2(O)$ and $w_2(O)=0$, we get $\partial_j w_2(O)=0$, or equivalently,
\begin{equation}
\label{T:12}
b_{1,0}=b_{0,1}=0.
\end{equation}
Repeating the same arguments, one can prove for $i+jn=3n+1$ that
\begin{equation}
\label{T:13}
a_{1,3}=a_{n+1,2}=a_{2n+1,1}=a_{3n+1,0}=0.
\end{equation}
This together $a_{l,0}=0$ for $2n\leq l\leq 3n-1$ and (\ref{T:15}) yields that
\be\label{N:11}
a_{4,0}=a_{3,1}=a_{2,2}= a_{1,3}=0.
\en
Equating the coefficients of the lowest order in (\ref{eq:r9}) and using (\ref{T:5}), (\ref{T:12}), we readily obtain $a_{0,4}=0$. Furthermore, we get $b_{0,2}=b_{1,1}=b_{2,0}=0$ with the help of (\ref{eq:t1}). This together with (\ref{T:15}),(\ref{T:12}),(\ref{T:13}) and (\ref{N:11}) gives (\ref{N:10}).


%


\textbf{Step 4}: Induction arguments. We make the hypothesis that
\be\label{E8}
 a_{i,j}&=&0\quad\mbox{for all}\;(i,j)\in \N_0\times\N_0\;\mbox{such that} \;\left\{\begin{array}{lll} i+jn\le p-1 &&\mbox{if}\; j\le 3, \\
  i+j \le p-3n+2,\quad&&\mbox{if}\; j\ge 4,
 \end{array}\right.
 \\ \nonumber
 b_{i,j}&=&0\quad\mbox{for all}\;(i,j)\in \N_0\times\N_0\;\mbox{such that}\; i+j\le p-3n
\en
for some $p\ge 3n+2$, $p\in\mathds{N}$. Note that for $p=3n+2$, this hypothesis has been proved in steps 1-3.
 Now we need to prove the hypothesis with the index $p$ replaced by $p+1$. For this purpose, it suffices to check that
 \be\label{eq:re3}
 a_{i,j}&=&0\quad\mbox{for all}\;(i,j)\in \N_0\times\N_0\;\mbox{such that} \;\left\{\begin{array}{lll} i+jn= p &&\mbox{if}\; j\le 3, \\
  i+j = p-3n+3,\quad&&\mbox{if}\; j\ge 4,
 \end{array}\right.
 \\ \label{eq:t17}
 b_{i,j}&=&0\quad\mbox{for all}\;(i,j)\in \N_0\times\N_0\;\mbox{such that}\; i+j= p-3n+1.
\en

For notational convenience, we introduce the set
\[
I_p:=\{(i,j)\in \mathbb{N}_0\times \mathbb{N}_0:\; i+jn=p\}.
\]
By our assumption that $n>2$, it holds for $(i_1,j_1), (i_2,j_2) \in I_p$ that
\[
i_1+j_1>i_2+j_2, \quad \mbox{if} \quad j_1<j_2.
\]
Therefore, for $(i,j) \in I_p$, we have
\begin{equation*}
i+j<p-3n+3,\quad\mbox{if}\quad j>3.
\end{equation*}
By the induction hypothesis (\ref{E8}), we get
\begin{equation*}
a_{i,j}=0,\quad \mbox{for all}\; (i,j) \in I_p,\; j>3.
\end{equation*}
Furthermore, it follows from the induction hypothesis that the coefficients $a_{i,j}$ fulfill the assumption of Lemma \ref{Le:1} with $k = p-3n$. Hence, setting $k=p-3n$ in  (\ref{eq:16}) and (\ref{eq:15}) yields
\be
\label{eq:69}
0&=&{(f^{\pm}_m)}^3\frac{a_{p-3n,3}}{(p-3n)!3!{(m!)}^3}+\frac{{(f^{\pm}_m)}^2}{{(m!)}^2} D_{p-3n,1}+ \frac{f_{\pm}^{m}}{{m!}}D_{p-3n, 2}+D_{p-3n, 3}, 
\\
\label{eq:70}
0&=&{(f^{\pm}_m)}^2\frac{3a_{p-3n,3}}{(p-3n)!3!{(m!)}^2}
+2\frac{{f^{\pm}_m}}{{m!}}D_{p-3n, 1}+D_{p-3n, 2}.
\en
Similarly to the derivation of (\ref{T:9}) and (\ref{T:10}) in Step 3, using $f_m^+\neq f_m^-$ we can get  a linear algebraic system for $a_{p-3n,3}$ and $D_{p-3n,1}$ as follows:
\be{\label{T:18}}
M\,\begin{pmatrix}
\frac{a_{p-3n,3}}{(p-3n)!3!m!}\\ D_{p-3n, 1}\end{pmatrix}=0,
\en
where $M\in\R^{2\times 2}$ is defined again by (\ref{T:9}).
The fact that $|M|\ne 0$ gives $a_{p-3n,3}=0$.
Inserting (\ref{eq:6}) into (\ref{eq:i2}) and equating the coefficients of $x_1^{p-2n+m}$, we readily get
\begin{align*}
2\frac{3a_{p-3n,3}}{(p-3n)!3!}\frac{f_{n}}{n!}\frac{f_{m}^{+}}{m!}+\frac{2a_{p-2n,2}}{2(p-2n)!}\frac{f_{m}^{+}}{m!}=2\frac{3a_{p-3n,3}}{(p-3n)!3!}\frac{f_{n}}{n!}\frac{f_{m}^{-}}{m!}+\frac{2a_{p-2n,2}}{2(p-2n)!}\frac{f_{m}^{-}}{m!}
\end{align*}
This combined with $a_{p-3n,3}=0$ and $f_m^+\neq f_m^-$ yields \be\label{T:17}
a_{p-3n,3}= a_{p-2n,2}=0.
\en
Using the induction hypothesis and comparing the coefficients of $x_1^{p}$ and $x_1^{p-n}$, respectively, it follows that
\be
\label{eq:66}
\frac{a_{p-3n,3}}{3!(p-3n)!}\frac{{(f_n)}^3}{{(n!)}^3}
+\frac{a_{p-2n,2}}{2(p-2n)!}\frac{{(f_n)}^2}{{(n!)}^2}
+\frac{a_{p-n,1}}{(p-n)!}\frac{{f_n}}{{n!}}
+\frac{a_{p,0}}{(p)!}=0,\\
\label{eq:67}
\frac{3a_{p-3n,3}}{3!(p-3n)!}\frac{{(f_n)}^2}{{(n!)}^2}+\frac{2a_{p-2n,2}}{2(p-2n)!{(n!)}}\frac{{f_n}}{{n!}}+\frac{a_{p-n,1}}{(p-n)!}=0.
\en
Combing (\ref{T:17}), (\ref{eq:66}) and (\ref{eq:67}), we have
\begin{equation}
\label{eq:g12}
a_{p-3n,3}=a_{p-2n,2}=a_{p-n,1}=a_{p,0}=0.
\end{equation}
On the other hand, utilizing the fact that $n\geq 2$ and
the induction hypothesis for $a_{i,j}$ with $j\leq 3$, we get
\begin{equation}
\label{eq:dd}
a_{p-3n+1,2}=a_{p-3n+2,1}=a_{p-3n+3,0}=0.
\end{equation}
With the aid of (\ref{eq:r9}),(\ref{eq:dd}) and the induction hypothesis $b_{i,j}=0, i+j\le p-3n$, we readily get by equating the coefficients of $x_1^{i_1}x_2^{j_1}$,
with $i_1+j_1 =p-3n-1$ that
\begin{equation} \label{eq:202}
a_{i,j}=0,\quad\mbox{if}\quad i+j =p-3n+3,\quad j\ge 4.
\end{equation}
This together with (\ref{eq:t1}) gives  $b_{i,j}=0$ if $i+j= p-3n+1$.

By far we have proved all relations in (\ref{eq:re3}) and (\ref{eq:t17}).
By induction, it holds that $a_{i,j}=b_{i,j}=0$ for all $i,j \in \mathbb{N}_0$. The proof of case (i) is thus complete. \hfill $\Box$

\subsection{Proof of Lemma \ref{D.1} in Case (ii): $q_1(O)= q_2(O), \partial_{1} q_1(O) \ne \partial_{1} q_2(O)$.}\label{sec:2.4}
The proof in the second case can be carried out analogously to case (i). Below we sketch the proof by indicating the differences to case (i).

\textbf{Step 1}: Using the same arguments in the proof of case (i), we have $a_{i,j}=0$, $i+jn \le 3n-1$.

\textbf{Step 2} : Similar to the derivation of (\ref{T:15}) in case (i), we can obtain  $a_{0,3}=a_{n,2}=a_{2n,1}=a_{3n,0}=0. $ From $(\ref{eq:t1})$, we get
\ben
\partial_{1} (q_1(O)-q_2(O))b_{0,0} = a_{3,0}=0.
\enn
This together with the condition $\partial_{1} q_1(O) \ne \partial_{1} q_2(O)$ gives $b_{0,0}= 0$. Repeating this procedure, we could prove $a_{1,3}=a_{n+1,2}=a_{2n+1,1}=a_{3n+1,0}=0$. Combing this with (\ref{eq:t1}) yields that
\ben
&&\big(\partial_{1} (q_1(O)-q_2(O))\big)b_{1,0} = \frac{1}{2}a_{4,0}+ \frac{1}{2}a_{2,2}=0,\notag\\
&& \big(\partial_{2} (q_1(O)-q_2(O))\big)b_{1,0} + \big(\partial_{1} (q_1(O)-q_2(O))\big)b_{0,1} =  a_{3,1}+a_{1,3} =0,\\
&& \big(\partial_{2} (q_1(O)-q_2(O))\big)b_{0,1}  = \frac{1}{2}a_{0,4},\notag
\enn
which imply $b_{1,0} = b_{0,1} = 0$. This together with (\ref{eq:r9}) leads to $a_{0,4} = 0$. To summarize this step we obtain
\begin{align*}
&a_{i,j}=0, \quad 3n \le i+jn\le 3n+1 \quad\mbox{or}\quad i+j = 4; \\
&b_{i,j}=0, \quad i+j\le 1.
\end{align*}

\textbf{Step 3}. In this step, we will adopt an
induction argument
 similar to Step 4 of case (i). We make the same hypothesis for $a_{i,j}$ as before:  $a_{i,j}=0$ for all $i+jn\le p-1, j\le 3$ and $i+j \le p-3n+2, j\ge 4$, where $p\ge 3n+2$, $ p\in\mathbb{N}_0$. However, we assume that $b_{i,j}=0$, $i+j\le p-3n-1$, with the bound of $i+j$ different from case (i).
 Our aim is to prove (\ref{eq:re3}) and
 \begin{align} \label{N:13}
 &b_{i,j}= 0, i+j= p-3n.
 \end{align}
 We remark that the relations in (\ref{eq:re3}) can be proved in the same way as Step 3 of case (i). To prove (\ref{N:13}), with the help of induction hypothesis, we conclude from (\ref{eq:t1}) that
\begin{align}
&\frac{\big(\partial_{1} (q_1-q_2)(O)\big)}{(p_1-3)!}b_{p_1-3,0} \notag
=\frac{1}{(p_1-2)!}(a_{p_1,0}+ a_{p_1-2,2}),\\
&\frac{\big(\partial_{2} (q_1-q_2)(O)\big)}{(p_1-k_1-3)!k_1!}b_{p_1-k_1-3,k_1} + \frac{\big(\partial_{1} (q_1-q_2)(O)\big)}{(p_1-k_1-4)!(k_1+1)!}b_{p_1-k_1-4,k_1+1} \label{e6}\\
&=\frac{1}{(p_1-k_1-3)!(k_1+1)!}(a_{p_1-k_1-1,k_1+1}+a_{p_1-k_1-3,k_1+3}),\label{N:3} \\
&\frac{\big(\partial_{2} (q_1-q_2)(O)\big)}{(p_1-3)!}b_{0,p_1-3}
 =\frac{1}{(p_1-2)!}a_{2,p_1-2}, \notag
\end{align}
where $p_1= p-3n+3$ and $k_1,k_2$ are two integers satisfying $ 0\le k_1 \le(p_1-4), 0\le k_2 \le (p_1-4)$. Analogously, combing with induction hypothesis and (\ref{eq:r9}) gives that
\begin{align}
&\frac{2}{(p_1-4)!}\bigg(\big(\partial_{1} (q_1-q_2)(O)\big)b_{p_1-3,0} + \big(\partial_{2} (q_1-q_2)(O)\big)b_{p_1-4,1}\bigg)
=\frac{(a_{p_1,0}+ 2a_{p_1-2,2} + a_{p_1-4,4})}{(p_1-4)!},\notag\\
&\frac{2}{(p_1-k_2-4)!k_2!}\bigg(\big(\partial_{1} (q_1(O)-q_2(O))\big)b_{p_1-k_2-3,k_2},
+ \big(\partial_{2} (q_1(O)-q_2(O))\big)b_{p_1-k_2-4,k_2+1}\bigg) \notag\\
&= \frac{1}{(p_1-k_2-4)!k_2!}(a_{p_1-k_2,k_2} + 2a_{p_1-k_2-2,k_2+2} + a_{p_1-k_2-4,k_2+4}).\label{N:5}
\end{align}
 Further, using the similar arguments in deriving (\ref{eq:g12}), we readily obtain that
\[
a_{p_1,0}=a_{p_1-1,1}= a_{p_1-2,2}= a_{p_1-3,3}.
\]
This together with (\ref{e6}) and (\ref{N:3}) with $k_1 = 0$ implies
\be\label{N:12}
b_{p_1-3,0}=b_{p_1-4,1}=0.
\en
Hence, it is deduced from  (\ref{N:5}) and (\ref{N:12}) that $a_{p_1-4,4} = 0$, and combining (\ref{N:3}) with $k_1 = 1$ yields that $b_{p_1-5,2}=0$. Setting $k_2=1$ in (\ref{N:5}), it is esay to verify that $a_{p_1-5,5}= 0$ due to $b_{p_1-4,1}=b_{p_1-5,2}=0$. Repeating this procedure successively, we will get (\ref{eq:re3}) and (\ref{N:13}).   

 By induction, it holds that $a_{i,j}= b_{i,j}=0$ for all $ i,j \in \mathbb{N}_0$. The proof of Lemma \ref{D.1} is thus complete in the second case. \hfill $\Box$

\begin{re}\label{n=m}
In the case of $n=m$, the proof of  Lemma \ref{D.1} should be slightly modified.
The only difference is to replace (\ref{eq:69}) and (\ref{eq:70}) by
\be{\label{T:16}}
{(f^{\pm}_m)}^3\frac{a_{p-3m,3}}{(p-3m)!3!{(m!)}^3}+ \frac{a_{p-2m,2}}{(p-2m)!2}\frac{{(f^{\pm}_m)}^2}{{(m!)}^2}+ \frac{a_{p-m,1}}{(p-m)!} \frac{f_{\pm}^{m}}{{m!}} + \frac{a_{p,0}}{p!}=0
\en
and
\be{\label{T:20}}
{(f^{\pm}_m)}^2\frac{3a_{p-3m,3}}{(p-3m)!3!{(m!)}^2}+ \frac{2a_{p-2m,2}}{(p-2m)!2}\frac{{(f^{\pm}_m)}}{{(m!)}}+ \frac{a_{p-m,1}}{(p-m)!}=0,
\en
respectively.
Similar to the derivation of (\ref{T:18}), we can obtain
$a_{p-3m,3}=a_{p-2m,2}=0$. This together with (\ref{T:16}) and (\ref{T:20}) also gives that
\[
a_{p-3m,3}=a_{p-2m,2}=a_{p-m,1}=a_{p,0}=0.
\]
Proceeding with the same lines as for the case $n<m$,
 we can also prove $w_1=w_2\equiv0$ when $n=m$.
\end{re}

\section{Characterization of radiating sources}\label{sec:Source}

 This section is devoted to proving Theorems \ref{th1} and \ref{th2}.
 One should note that the inverse source problem for recovering a source term is linear, whereas the inverse medium problems for shape identification and medium recovery are both nonlinear. Hence, the techniques for extracting source information from measurement data are usually easier than inverse medium scattering problems.
Lemma \ref{Le:4} below can be regarded as the analogue of Lemma \ref{D.1} for inverse source problems.

\begin{lemma} \label{Le:4}
Let $w\in H^1(B_1)$ be a solution to
\be\left\{\begin{array}{lll}
\Delta w +\kappa^2 \mathfrak{n}(x)w=s \quad &&\mbox{in}\quad B_1, \label{N:7}\\
w=0, \frac{\partial w}{\partial \nu}=0 \quad &&\mbox{on} \quad \Gamma\ \cap B_1,
\end{array}\right.
\en
where $\mathfrak{n}(x)$ is analytic in $B_1$ and $O\in \Gamma$ is a weakly singular point. The source term $s(x)$ is supposed to satisfy the elliptic equation
\be\label{equation:s}
\Delta s(x) + A(x)\cdot \nabla s(x) + b(x) s(x)=0 \ \text{in} \ B_1 ,
\en
where $A(x)=(a_1(x), a_2(x))$ and $b(x)$ are both analytic in $B_1$. Then
\be \label{N:9}
w=s\equiv0 \ \text{in} \   B_1.
\en
\end{lemma}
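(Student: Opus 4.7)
The plan is to adapt the Taylor-coefficient induction used in the proof of Lemma \ref{D.1}, replacing the Helmholtz pair $(w_1, w_2)$ by the pair $(w, s)$ coupled through (\ref{N:7}) and (\ref{equation:s}). Since $\mathfrak{n}, A, b$ are real-analytic, standard elliptic regularity makes $s$ analytic on $B_1$, and Cauchy--Kovalevski applied to each analytic arc of $\Gamma$ (with the vanishing Cauchy data $w = \partial_\nu w = 0$) extends $w$ analytically across the weakly singular point $O$. Writing
\[
w(x) = \sum_{i,j \geq 0} \frac{a_{i,j}}{i!\,j!}\, x_1^i x_2^j, \qquad s(x) = \sum_{i,j \geq 0} \frac{c_{i,j}}{i!\,j!}\, x_1^i x_2^j,
\]
the Cauchy data relations (\ref{eq:4}) and (\ref{eq:i2}) hold verbatim for $w$, so Lemmata \ref{Le:1}--\ref{Le:3} apply unchanged to the $a_{i,j}$.

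I would treat the case $n < m$ first (the case $n = m$ being analogous, as in Remark \ref{n=m}). Steps 1 and 2 of case (i) of the proof of Lemma \ref{D.1} depend only on the Cauchy data of $w$ and the algebra of (\ref{eq:60}), and transfer verbatim to give $a_{i,j} = 0$ for $i + jn \leq 3n - 1$. In particular $a_{0,0} = 0$ and $a_{2,0} + a_{0,2} = 0$, so evaluating (\ref{N:7}) at $O$ forces $c_{0,0} = s(O) = 0$. Applying Lemmata \ref{Le:2}--\ref{Le:3} with $k = 0$ then produces the $2 \times 2$ system (\ref{T:6})--(\ref{T:7}), whose determinant $-(f_m^+ - f_m^-)^2$ is nonzero by (\ref{eq:60}), forcing $a_{0,3} = 0$; the cascade (\ref{eq:52})--(\ref{T:15}) then gives $a_{i,j} = 0$ for all $(i,j)$ with $i + jn \in \{3n, 3n+1\}$, and differentiating (\ref{N:7}) at $O$ yields $c_{1,0} = c_{0,1} = 0$.

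The induction follows the template of Step 4 of case (i) of the proof of Lemma \ref{D.1}. At level $p \geq 3n + 2$ I would assume that $a_{i,j} = 0$ for $i + jn \leq p - 1$ (when $j \leq 3$) or $i + j \leq p - 3n + 2$ (when $j \geq 4$), and that $c_{i,j} = 0$ for $i + j \leq p - 3n - 1$. Lemmata \ref{Le:2}--\ref{Le:3} with $k = p - 3n$ give the $2 \times 2$ system (\ref{T:18}), forcing $a_{p-3n,3} = 0$, and (\ref{eq:66})--(\ref{eq:g12}) then propagate this to all $a_{i,j}$ with $i + jn = p$ and $j \leq 3$. Repeated differentiation of (\ref{N:7}) together with the induction hypothesis gives the vanishing of the remaining $a_{i,j}$ with $i + j = p - 3n + 3$, $j \geq 4$, and (\ref{N:7}) then expresses every $c_{i,j}$ with $i + j = p - 3n$ as a polynomial in vanishing $a$'s, closing the induction. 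Analyticity in $B_1$ finally upgrades the vanishing at $O$ to global vanishing $w \equiv s \equiv 0$, with the unique continuation principle mopping up any portion of $B_1$ not reached by the Taylor series directly.

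The main obstacle is the absence of the jump hypothesis $|q_1 - q_2|(O) + |\partial_1(q_1 - q_2)|(O) > 0$ that powered the induction in Lemma \ref{D.1}: here the only direct linkage from the $a$'s to the $c$'s is (\ref{N:7}), with no nonzero scalar coefficient to divide out. The elliptic equation (\ref{equation:s}) for $s$ compensates for this by constraining the lowest-order Taylor polynomial of $s$ at $O$ to be harmonic (see the discussion after Theorem \ref{th2}), which provides the extra independent scalar relation per level that is needed to match the two pieces of information coming from Lemmata \ref{Le:2} and \ref{Le:3}. The delicate book-keeping lies in aligning the induction levels for $a$ and $c$ so that the $a$-vanishing at level $p$ always suffices to zero out the $c_{i,j}$ at level $p - 3n$ before the next step begins; I expect this coupling, rather than any single algebraic identity, to be the heart of the argument.
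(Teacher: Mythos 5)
Your overall skeleton does match the paper's: Taylor-expand $w$ and $s$, reuse Lemmata \ref{Le:1}--\ref{Le:3} (which use only the Cauchy data of $w$ and the vanishing pattern of the $a_{i,j}$, so they indeed apply unchanged), run the induction scheme of Lemma \ref{D.1}, and note that (\ref{N:7}) transfers the vanishing of the $a_{i,j}$ up to order $l$ directly into the vanishing of the $s$-coefficients up to order $l-2$, so that no jump condition is needed. All of this agrees with the paper's proof.

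The gap is in how you dispose of the coefficients $a_{i,j}$ with $j\ge 4$ (already $a_{0,4}$ in the base case, and $i+j=p-3n+3$, $j\ge 4$ in the induction step). You claim these vanish by ``repeated differentiation of (\ref{N:7})'', but equating Taylor coefficients of (\ref{N:7}) at order $p-3n+1$ gives $a_{i+2,j}+a_{i,j+2}+(\text{vanishing lower-order terms})=c_{i,j}$ with $i+j=p-3n+1$, and these $c_{i,j}$ are not covered by your hypothesis (the $c$'s are only known to vanish up to order $p-3n$); they are in turn expressed through (\ref{N:7}) by exactly the unknown $a_{i,j}$ of order $p-3n+3$, $j\ge 4$, so the argument is circular. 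This is precisely where the hypothesis (\ref{equation:s}) must enter: the paper applies $\Delta$ to (\ref{N:7}) and substitutes $\Delta s=-A\cdot\nabla s-bs$, obtaining the fourth-order identity (\ref{N:8}) whose right-hand side involves only $s$ and $\nabla s$, hence only $c$-coefficients of order $\le p-3n$, already zero by (\ref{N:7}) and the induction hypothesis; equating coefficients at order $p-3n-1$ then produces the combinations $a_{i+4,j}+2a_{i+2,j+2}+a_{i,j+4}$, and the cascade starting from the already-proved $j\le 3$ coefficients (exactly as in the derivation of (\ref{eq:202})) kills the $j\ge 4$ ones. Your closing paragraph instead assigns (\ref{equation:s}) the role of supplying an extra relation needed to ``match'' Lemmata \ref{Le:2} and \ref{Le:3}; that is not where it is needed --- those two lemmata together with $f_m^+\ne f_m^-$ already determine the $j\le 3$ coefficients --- and without using (\ref{equation:s}) in the way just described the conclusion $w=s\equiv 0$ cannot be reached (for a general analytic $s$ only the weaker Lemma \ref{Le:5} holds).
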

\begin{proof}
Since $w$ and $s$ are real-analytic in $B_1$, they can be expanded into the Taylor series
\be\label{TE}
w(x)=\sum_{i,j\ge 0}\frac{w_{i,j}}{i!j!}x_1^ix_2^j, \quad
s(x)= \sum_{i,j\ge 0}\frac{s_{i,j}}{i!j!}x_1^ix_2^j,\quad x=(x_1,x_2) \in B_1.
\en
Taking $\Delta$ to the equation of $w$ and using the governing equation of $s$, we find
\be \label{N:8}
\Delta\Delta w +\kappa^2\Delta (\mathfrak{n}(x)w) = -A(x)\cdot \nabla s(x) - b(x) s(x).
\en
From (\ref{N:7}), it follows that, for any integer ${\color{red}l}\geq 2$,
 the statement  $w_{i,j}= 0$, $i+j \le l$ leads to $s_{i,j}=0, i+j \le l-2$. Further, similar to the derivation of (\ref{eq:202}), one can deduce from (\ref{N:8}) and the relations  $w_{i,j}=0, i+j=l,j\le 3$; $w_{i,j} = 0 , i+j \le l-1$ and  $s_{i,j}=0, i+j\le l-3$ that $w_{i,j} = 0$ , $i+j = l, j\ge 4$.  Hence, using the same method as employed in the proof of Lemma \ref{D.1}, one can prove (\ref{N:9}).
 We omit the details for brevity.
\end{proof}
\begin{re}
Lemma \ref{Le:4} applies to  analytical source terms $s(x)$ whose lowest order Taylor expansion at $O$ is harmonic. By \cite{HL}, the solutions to  (\ref{equation:s}) process such a property.
\end{re}

\begin{lemma} \label{Le:5}
If the source term $s$ is only required to be analytic in Lemma \ref{Le:4}, then
\ben 
s(O)=|\nabla s (O)|=0.
\enn
\end{lemma}
\begin{proof}
The analyticity of $\mathfrak{n}$ and $s$ guarantees the same Taylor expansions as in (\ref{TE}). Employing the same arguments in steps 1-2 in the proof of Lemma \ref{D.1} yields $s(O)=0$. The method for proving (\ref{T:15}) in the proof of Lemma \ref{D.1}  could directly lead to $|\nabla s(O)|=0$.
\end{proof}
Now we are ready to prove Theorems \ref{th1} and \ref{th2}, by applying Lemmata \ref{Le:5} and \ref{Le:4}, respectively.

{\bf Proof of Theorem \ref{th1}:}
\begin{itemize}
\item[(i)]
Assuming $v^{\infty} = 0$, then we obtain $v\equiv0$ in $|x|>R$ by Rellich's lemma and
 $v\equiv0$ in
 $\R^2\backslash\overline{D}$ by the unique continuation of the Helmholtz equation (see  \cite[Theorem 17.2.6, Chapter XVII]{UCP}). In particular, the Cauchy data $v$,$\partial_\nu v $ vanish on $\Gamma_{\epsilon} = \partial D \cap B_{\epsilon}(O)$ for some $\epsilon > 0$.
 Since $\Gamma_\epsilon$ is piecewise analytic and the Cauchy data are both analytic on $\Gamma_\epsilon$, by the Cauchy-Kovalevski theorem we can extend $v$ from $D\cap B_\epsilon(O)$ to $B_\epsilon$. Hence, the extended function $v$ satisfies
\be \label{N:14}
\begin{cases}
&\Delta v +\kappa^2 \mathfrak{n}(x)v=S \quad \text{in} \ B_{\epsilon},\\
&v=0, \frac{\partial v}{\partial \nu}=0 \qquad \quad\, \text{on} \ \Gamma_{\epsilon},\\
\end{cases}
\en
where $S$ is the analytical extension of $s$ around $O$.
Applying Lemma \ref{Le:5} gives $s(O)=|\nabla s (O)|=0$, which contradicts the assumption that $|s(O)|+|\nabla s(O)| >0$.
\item[(ii)] Suppose that $v$ can be analytically continued from $B_R\backslash\overline{D}$ to $B_\epsilon(O)$ for some $\epsilon>0$. The extended solution obviously satisfies $\Delta v +\kappa^2 \mathfrak{n}(x)v=0$ in $B_\epsilon(O)$. On the other hand, by  Lemma \ref{nle} 
    we can extend $v$ from $D\cap B_\epsilon(O)$ to $B_\epsilon$ as the solution of $\Delta w +\kappa^2 \mathfrak{n}(x)w=S$ in $B_\epsilon(O)$. Now, we can observe that the difference $w-v$ satisfies the same Cauchy problem as in (\ref{N:14}). Applying Lemma \ref{Le:5} yields the same contradiction to $|s(O)|+|\nabla s(O)| >0$.

\end{itemize}
\hfill$\Box$


{\bf Proof of Theorem \ref{th2}:}
\begin{itemize}
\item[(i)] Suppose that there are two sources $s$ and $\tilde s$ which generate identical far-field patterns and have the same support $D$. Denote by $v$ and $\tilde v$ the wave fields radiated by $s$ and $\tilde s$, respectively. By Rellich's lemma and the unique continuation, we know  $v=\tilde v$ in $\R^2 \backslash\overline{D}$. Setting $u:= v-\tilde v$, it follows that
\be \label{N:16}
\begin{cases}
&\Delta u +\kappa^2 \mathfrak{n}(x)u= s-\tilde{s} \quad \text{in} \ B_{\epsilon},\\
&u=0, \frac{\partial u}{\partial \nu}=0 \qquad\qquad\quad \text{on} \ \partial D \cap B_{\epsilon},\\
\end{cases}
\en
for some $\epsilon >0$. Applying Lemma  $\ref{Le:5}$ gives $s(O)=\tilde{s}(O)$ and $\nabla s(O)=\nabla\tilde{s}(O)$.
\item[(ii)] Applying Lemma \ref{Le:4} to (\ref{N:16}), we get $s=\tilde{s}$ near $O$, because the difference $s-\tilde{s}$ on the right hand side also satisfies the elliptic equation (\ref{N:15}). Applying the unique continuation for elliptic equation (see  \cite[Theorem 17.2.6, Chapter XVII]{UCP}) gives $s\equiv\tilde{s}$ on $D$.
\end{itemize}
\hfill$\Box$
\section{Appendix:  Proof of Lemma \ref{Le:1}}
\tcb{Let $m\geq 2$ be the order of the weakly singular point $O\in\Gamma$ and let $k\in \N$ be the integer specified in Lemma \ref{Le:1} where $2\leq n<m$ is the integer satisfying (\ref{n}). Recall that $g:=w|_\Gamma$ and $h:=\partial_2w|_\Gamma$ are given by (\ref{g}) and (\ref{h}), respectively. It follows from (\ref{eq:4}) and (\ref{eq:i2}) that $g_j^\pm=h_j^\pm=0$ for all $j\in\N_0$.}
\tcr{Before proving Lemma \ref{Le:1}, we need to introduce several new index sets and prepare some lemmas.}
\begin{definition} \label{d.1}
For $(i,j)\in \Upsilon_{\alpha}^a$, we denote the set of all pair sequences
${\{(i_k,j_k)\}}^d_{k=1}$ satisfying (\ref{e1}) by $\Lambda^{a}_{i,j,\alpha}$. Here, $\Upsilon_{\alpha}^a$ is given by Definition \ref{d}. Furthermore, we define the set $\Lambda^{a}_{i,j,\alpha,m}:= \big\{{\{(i_k,j_k)\}}^d_{k=1}\in \Lambda^{a}_{i,j,\alpha}: i_k<m \big\}$ and denote by
\begin{align} \label{e2}
\mathcal M_{\alpha}^a := \sum_{(i,j)\in \Upsilon_{\alpha}^{a}}\frac{C^{\alpha}_j a_{i,j}}{i!j!} \sum_{{\{(i_k,j_k)\}}^d_{k=1}\in \Lambda^{a}_{i,j,\alpha,m}}
\sum^d_{k=1}{{\left(\frac{f_{i_k}}{i_k!}\right)}^{j_k}C^{j_k}_{j-\alpha-\sum\limits_{l<k} j_l}}.
\end{align}
Here, $C^a_b:=b!/(a! (b-a)!)$ denotes the combination number with $0\leq a\leq b$.
\end{definition}

\begin{remark}\label{r1}
From the definition of $\mathcal M_{\alpha}^a$, it is easily seen that 
\begin{align*}
\mathcal M_{\alpha}^a x_1^a = \sum_{(i,j)\in \Upsilon_{\alpha}^{a}}\frac{C^{\alpha}_j a_{i,j}}{i!j!}x^i_1 \sum_{{\{(i_k,j_k)\}}^d_{k=1}\in \Lambda^{a}_{i,j,\alpha,m}}
\sum^d_{k=1}{{\left(\frac{f_{i_k}}{i_k!}x^{i_k}_1\right)}^{j_k}C^{j_k}_{j-\alpha-\sum\limits_{l<k} j_l}}.
\end{align*}
\tcr{Note that we have $i+i_k j_k=a$ for $(i,j)\in \Upsilon_{\alpha}^a$ and
${\{(i_k,j_k)\}}^d_{k=1}\in\Lambda^{a}_{i,j,\alpha,m}$ by definition of the index sets. }
\end{remark}
\tcr{Now we make use of $\mathcal M_{\alpha}^a\in \C$  to express} $g^{l}_{\pm}$ and $h^{l}_{\pm}$ for some $l$.

\begin{lemma} \label{l}
\tcr{Let the assumptions in Lemma \ref{Le:1} hold true}.
\begin{enumerate}[(a)]
\item \tcr{For $-(m+k)\le l<m$}, we have
\begin{align} \label{e8}
\frac{g^{\pm}_{m+l+k}}{(m+l+k)!} =  \sum_{\lambda \in J }2\mathcal M^{\lambda+k}_2 \frac{f^{\pm}_{l+m-\lambda}}{(l+m-\lambda)!}+ \mathcal M^{l+m+k}_1.
 \end{align}
Here, $J:=\{\lambda\in \mathbb Z: -k\le\lambda \le l\}$. If $J = \emptyset$, then we denote $\sum_{\lambda \in J }= 0$.
\tcr{For $l=m$}, we have
\begin{align} \label{e9}
\frac{{g^{\pm}_{2m+k}}}{(2m+k)!}={\sum_{m<l\le 2m+k}\frac{f^{\pm}_{l}}{l!}}2\mathcal M^{{2m-l}+k}_2+\frac{3a_{k,3}}{k!3!}\frac{{(f^{\pm}_m)}^2}{{(m!)}^2}
+\frac{{f^{\pm}_m}}{m!}2\mathcal M^{m+k}_2 + \mathcal M^{2m+k}_1.
\end{align}
\item
\begin{align}\label{e13}
\frac{h^{\pm}_{3m+k}}{(3m+k)!}&=\sum_{ m<l\le 3m+k}\frac{f^{\pm}_lg^{\pm}_{3m-l+k}}{l!(3m-l+k)!}- \sum_{m<l\le \frac{3m+k}2}\left(\frac{f^{\pm}_l}{l!}\right)^2  \mathcal M^{3m-2l+k}_{2} +\frac{{(f^{\pm}_m)}^3}{{(m!)}^3}\frac{a_{k,3}}{k!3!}\\
&+\frac{{(f^{\pm}_m)}^2}{{(m!)}^2}\mathcal M^{m+k}_2+\frac{f^{\pm}_{m}}{m!}\mathcal M^{2m+k}_1
+\mathcal M^{3m+k}_0. \notag
\end{align}
\end{enumerate}
\end{lemma}
\begin{proof}
(a)
Denote by $A^{\pm}:=\{f_j^{\pm}: j\ge m\}$, $A_1^{\pm}:=\{f_j^{\pm}: j> m\}$ \tcr{and recall (e.g., \eqref{g})}
\begin{align*}
g(x_1)=\sum_{i\ge 0, j\ge1} \frac{ja_{i,j}}{i !\,j !} x_1^i\left( \sum_{l=n}^{m-1} \frac{f_l}{j!} x_1^l  +   \sum_{l=m}^{\infty} \frac{f_l^\pm}{j!} x_1^l  \right)^{j-1} = \sum_{l\in \N_0}\frac{g^l_\pm}{\tcr{l\,!}} x_1^l \quad \mbox{for}\quad x_1\lessgtr 0.
\end{align*}
Then, by Remark \ref{r1} straightforward calculations show that, when $-(m+k)\le l<m$,
\begin{align*}
\frac{g^{\pm}_{m+l+k}}{(m+l+k)!}x_1^{m+l+k} = \bigg(\sum_{\lambda \in J} \frac{2\mathcal M^{\lambda+k}_2 f^{\pm}_{l+m-\lambda}}{(l+m-\lambda)!}+ \mathcal M^{l+m+k}_1+\eta_{l}^{\pm}\bigg)x_1^{m+l+k}, \quad x_1\lessgtr 0.
\end{align*}
Here, $\eta_l^{\pm}$ are the sum of monomials for \tcr{$f_l\in A^{\pm}$ which rely} on at least two elements from the set $A^{\pm}$. \tcr{In fact, the numbers} $\eta_l^{\pm}$  can be obtained by inserting (\ref{eq:60}) into $\sum_{(i,j)\in J_{k,l}} {ja_{i,j}}/({i !\,j !}) x_1^ix_2^j$, where $J_{k,l}:= \{(i,j): i\le l+m+k-(2m+(j-3)n),\; j\ge 3 \}$. When $l<m$, it is easy to verify that $i+jn <k+3n$ for $(i,j)\in J_{k,l}$. Then, \tcr{in view of the induction hypothesis} on $a_{i,j}$ in Lemma \ref{Le:1} (see formula (\ref{er})), it can be deduced that $a_{i,j} =0$ \tcr{for} $(i,j)\in J_{k,l}$, implying $\eta^{\pm}_l = 0$. 
\tcr{This thus proves the equality (\ref{e8})}.

\tcr{To prove \eqref{e9}, we use Remark \ref{r1} again and compare} the coefficients of $x_1^{2m+k}$ in the expansion of $g$. It then follows that
\begin{align*}
\frac{{g^{\pm}_{2m+k}}}{(2m+k)!}={\sum_{m<l\le 2m+k}\frac{f^{\pm}_{l}}{l!}}2\mathcal M^{{2m-l}+k}_2+\frac{3a_{k,3}}{k!3!}\frac{{(f^{\pm}_m)}^2}{{(m!)}^2}
+\frac{{f^{\pm}_m}}{m!}2\mathcal M^{m+k}_2 + \mathcal M^{2m+k}_1  + \zeta^{\pm} + \zeta^{\pm}_m.
\end{align*}
Here, $\zeta^{\pm}$ are the sum of some monomials \tcr{relying on} $\Pi^{p}_{j=1} f_{j}^{\pm}$ with $f_{1}^{\pm}\in A_1^{\pm}$ and $f_{2}^{\pm}, \cdots f_{p}\in  A^{\pm}$ for some $p\ge 2$; $\zeta^{\pm}_m$ are the sum of monomials only depending on $(f_m^{\pm})^p$ for some $p\ge 3$ and on the coefficients $a_{i,j}$. \tcr{Arguing analogously to} $\eta^{\pm}_l$, we deduce from the assumptions on $a_{i,j}$  that $\zeta^{\pm}=\zeta^{\pm}_m=0$. This implies that (\ref{e9}) holds.

(b) \tcr{We first recall the expansion of $h$ by}
\begin{align*}
h(x_1)=\sum_{i\ge 0, j\ge0} \frac{a_{i,j}}{i !\,j !} x_1^i\left( \sum_{l=n}^{m-1} \frac{f_l}{j!} x_1^l  +   \sum_{l=m}^{\infty} \frac{f_l^\pm}{j!} x_1^l  \right)^{j} = \sum_{l\in \N_0}\frac{h^l_\pm}{\tcr{l\,!}} x_1^l \quad \quad x_1\lessgtr 0.
\end{align*}
Using Remark \ref{r1} and equating the coefficients of $x_1^{3m+k}$ in the above expansion, we get
\begin{align} \label{e12}
\frac{h^{\pm}_{3m+k}}{(3m+k)!}  =& \sum_{m<l\le 3m+k} \frac{f_l^{\pm}}{l!} \left(\sum_{\lambda \in J_1 } \frac{2\mathcal M^{\lambda+k}_2 f^{\pm}_{3m-l-\lambda}}{(3m-l-\lambda)!}+ \mathcal M^{3m-l+k}_1\right) + \sum_{m<l\le \frac{3m+k}2}\left(\frac{f^{\pm}_l}{l!}\right)^2  \mathcal M^{3m-2l+k}_{2} \notag\\
&\tcr{+}\frac{{(f^{\pm}_m)}^3}{{(m!)}^3}\frac{a_{k,3}}{k!3!}+\frac{{(f^{\pm}_m)}^2}{{(m!)}^2}\mathcal M^{m+k}_2+\frac{f^{\pm}_{m}}{m!}\mathcal M^{2m+k}_1 +\mathcal M^{3m+k}_0 + \gamma^{\pm}+ \gamma^{\pm}_m,
\end{align}
where $J_1:=\{\lambda \in \mathbb Z: -k\le\lambda \le 2m-l,
\lambda \ne 3m-2l\}$\tcr{;}  $\gamma^{\pm}$ are the sum of monomials depending on $\Pi^{p}_{j=1} f_{j}^{\pm}$ with $f_{1}^{\pm}\in A_1^{\pm}$ and $f_{2}^{\pm}, \cdots f_{p}\in  A^{\pm}$ for some $p\ge 3$\tcr{;} $\gamma^{\pm}_m$ depends on $(f_m^{\pm})^p$ for some $p\ge 4$ and the coefficients $a_{i,j}$. \tcr{Employing similar arguments as for $\eta_l^{\pm}$ in the assertion (a), it follows from the induction hypothesis} on $a_{i,j}$ that $\gamma^{\pm} = \gamma^{\pm}_m = 0$. On the other hand, it is easily seen that $3m-l<2m$ when $l>m$. Thus, from (\ref{e8}), we have
\begin{align*}
\frac{g^{\pm}_{3m-l+k}}{(3m-l+k)!} = \sum_{-k\le\lambda \le 2m-l } \frac{2\mathcal M^{\lambda+k}_2 f^{\pm}_{3m-l-\lambda}}{(3m-l-\lambda)!}+ \mathcal M^{3m-l+k}_1.
\end{align*}
This together with (\ref{e12}) and the fact that $\gamma^{\pm} = \gamma^{\pm}_m = 0$ \tcr{proves the relation} (\ref{e13}).
\end{proof}

 To proceed we introduce two new sets of indices.
 \begin{definition} 
Given $j \in \N, j<m $, the set $\digamma_j$ is  defined by
\[
\digamma_j=\{a:\;\; \exists\; b>m,  j+m=a+b,\; a\ge n ,\; f^{+}_{b}\ne f^{-}_{b} \}
\]
and the subset $\digamma_{j,1}$ of $\digamma_{j}$ is defined by
\[
\digamma_{j,1}=\{a\in \digamma_j: \;\;\exists\; a_1\ge n, b_1>m, \ a+m=a_1+b_1, f^{+}_{b_1}\ne f^{-}_{b_1} \}.
\]
\end{definition}

\begin{lemma} \label{l1}
Under the assumptions of Lemma \ref{Le:1}, the relation $g(x_1)\equiv 0$ for all $x_1\in(-1/2,1/2)$ (see (\ref{g}) for the definition of $g$) implies that
\begin{align} \label{eq:23}
\mathcal M^{l+k}_2=0\quad \mbox{for all}\quad -k \le l<m.
\end{align}
\end{lemma}

\begin{proof}
\tcr{Assume $ -k \le l <n$}. From the definition of the index sets $\Upsilon_{\alpha}^a$ (see Definition \ref{d}), it follows for $(i,j) \in \Upsilon^{l+k}_2$ that,
\begin{align*}
&i = l+k, \;\;\; \quad \quad
\quad \quad \quad \;  \textrm{if}\; j=2,\\
&i\le l+k-(j-2)n, \;\quad  \textrm{if} \; j\ge 3.
\end{align*}
\tcr{This leads to}
\begin{align*}
i+jn \le l+2n+k < 3n+k \quad\; \textrm{if} \quad (i,j) \in \Upsilon^{l+k}_2,
\end{align*}
which together with the induction hypothesis of Lemma \ref{Le:1} yields  $a_{i,j} =0$ when $(i,j) \in \Upsilon^{l+k}_2$. By definition \ref{d.1}, this implies  
\begin{align}\label{e11}
\mathcal M^{l+k}_2=0\quad \; \textrm{for} \;  -k \le l <n.
\end{align}
\tcr{Below we only need to consider the case of $l\ge n$. In the sequel, we set $f_p = f_p^+ = f_p^-$ if $f_p^+=f_p^-$ for some $p>m$.}

{We first assume that $\digamma_{l}$ is empty. 
\tcr{Combining} (\ref{e8}) in Lemma \ref{l}, the relations in (\ref{e11}) and $g^{\pm}_{m+l+k}=0$, we arrive at
}
\begin{align*}
 2\mathcal M^{l+k}_2\frac{f_{m}^{+}}{m!}+ \sum_{n\le\lambda < l } \frac{2\mathcal M^{\lambda+k}_2 f_{l+m-\lambda}}{(l+m-\lambda)!}+ \mathcal M^{l+m+k}_1 = 2\mathcal M^{l+k}_2\frac{f_{m}^{-}}{m!} +  \sum_{n\le\lambda < l } \frac{2\mathcal M^{\lambda+k}_2 f_{l+m-\lambda}}{(l+m-\lambda)!}+ \mathcal M^{l+m+k}_1,
\end{align*}
implying that
\begin{align}\label{e7}
\mathcal M^{l+k}_2\left(\frac{f_{m}^{+}}{m!}-\frac{f_{m}^{-}}{m!}\right)=0.
\end{align}
This together with the fact that $f^{+}_m \ne f^{-}_m $ proves (\ref{eq:23}).

{Now, suppose that $\digamma_{l}\neq\emptyset$.
Again using (\ref{e8}), (\ref{e11}) and  $g^{\pm}_{m+l+k}=0$} we deduce  that

\begin{align}
\label{eq:22}
&\sum_{\lambda_0\in \digamma_{l}}\frac{2f^{\pm}_{m+l-\lambda_0}}{(m+l-\lambda_0)!}\mathcal M^{\lambda_0+k}_2
+ \frac{2f^{\pm}_{m}}{m!}\mathcal M^{l+k}_2
=\mathcal M^{m+l+k}_1-\sum_{n\le\lambda_0 < l, \lambda_0 \notin \digamma_{l} } \frac{2\mathcal M^{\lambda_0+k}_2 f_{l+m-\lambda_0}}{(l+m-\lambda_0)!}.
\end{align}
Split the set $\digamma_{l}$ into two classes $\digamma_{l,1}$ and $\digamma_{l} \backslash \digamma_{l,1}$.
For ${\color{hw}{\lambda_0}} \in \digamma_{l} \backslash \digamma_{l,1}$, similar to the derivation of (\ref{e7}), it follows from (\ref{e8}), (\ref{e11}) and $g^{\pm}_{\lambda_0+m+k}=0$  that
\begin{align*}
&\mathcal M^{\lambda_0+k}_2\left(\frac{f_{m}^{+}}{m!}-\frac{f_{m}^{-}}{m!}\right)=0,
\end{align*}
leading to
\begin{align} \label{Ne:201}
\mathcal M^{\lambda_0+k}_2=0.
\end{align}

For any $\lambda_0\in \digamma_{l,1}$, one can deduce from (\ref{e8}), (\ref{e11}) and $g^{\pm}_{\lambda_0+m+k}=0$ that
\begin{align*}
\sum_{\lambda_1\in \digamma_{\lambda_0}}\frac{2f^{\pm}_{\lambda_0+m-\lambda_1}}{(\lambda_0+m-\lambda_1)!}\mathcal M^{\lambda_1+k}_2 +
\frac{2\mathcal M^{\lambda_0+k}_2 f^{\pm}_{m}}{m!} 
 = \mathcal M^{\lambda_0+m+k}_1-\sum_{n\le\lambda_0 < \lambda_1, \lambda_1 \notin \digamma_{\lambda_0} } \frac{2\mathcal M^{\lambda_1+k}_2 f_{\lambda_0+m-\lambda_1}}{(\lambda_0+m-\lambda_1)!}.
\end{align*}

Repeating this process we can divide the set $\digamma_{\lambda_0}$ into $\digamma_{\lambda_0,1}$ and $\digamma_{\lambda_0} \backslash \digamma_{\lambda_0,1}$ to generate a new set $\digamma_{\lambda_1}$. Then we split $\digamma_{\lambda_1}$  into $\digamma_{\lambda_1, 1}$ and $\digamma_{\lambda_1} \backslash \digamma_{\lambda_1,1}$ to continue this process. After a finite number of steps we may end up this process \tcr{with the empty set} $\digamma_{\lambda_r}=\emptyset$ for some $\lambda_r \geq n$. In this way we can get the sequence
${\lambda_0>\lambda_1>\cdots \ge \lambda_r}$. For simplicity, we assume that there is only one chain $\lambda_0\rightarrow \lambda_1 \rightarrow \cdots \rightarrow \lambda_r$ and \tcr{that} $\digamma_{\lambda_r}$ is the first empty set.   The case of multiple chains can be proved similarly. Further, with the aid of (\ref{e8}), (\ref{e11}) and $g^{\pm}_{\lambda_l+m+k}=0$  with $0\le l\le r$, we have for each $l , 0 \le l \le {\color{hw}{r-1}}$ that
\begin{align} \nonumber
&\sum_{\lambda_{l+1}\in \digamma_{\lambda_l}}\frac{2\mathcal M^{\lambda_{l+1}+k}_2f^{\pm}_{\lambda_{l}+m-\lambda_{l+1}}}{(\lambda_l+m-\lambda_{l+1})!} +
\frac{2f^{\pm}_{m}}{m!} \mathcal M^{\lambda_l+k}_2\\ \label{eq:20}
 =& \mathcal M^{\lambda_l+m+k}_1-\sum_{n\le\lambda_0 < \lambda_l, \lambda_{l+1} \notin \digamma_{\lambda_l} } \frac{2\mathcal M^{\lambda_{l+1}+k}_2 f_{\lambda_{l}+m-\lambda_{l+1}}}{(\lambda_l+m-\lambda_{l+1})!}
\end{align}
and
\begin{align*}
 2\mathcal M^{\lambda_r+k}_2\frac{f_{m}^{\pm}}{m!}= - \sum_{n\le\lambda < \lambda_r } \frac{2\mathcal M^{\lambda+k}_2 f_{\lambda_r+m-\lambda}}{(\lambda_r+m-\lambda)!}- \mathcal M^{\lambda_r+m+k}_1.
\end{align*}
 The \tcr{last} identity together with the fact that $f^{+}_m \ne f^{-}_m $ implies $\mathcal M^{\lambda_r+k}_2 =0$.
This combined with (\ref{eq:20}) for $l=r-1$ gives the relation
\[
\mathcal M^{\lambda_{l-1}+k}_2=0.
\]
Repeating the same arguments, we can obtain (\ref{Ne:201}). 
\tcr{Combining}  with  (\ref{eq:22}) and (\ref{Ne:201}) gives $(\ref{eq:23})$. The proof is thus completed.
\end{proof}

Now we are in a position to \tcr{finish} the proof of  Lemma \ref{Le:1}.

\begin{proof}[Proof of  Lemma \ref{Le:1}]
Let
\begin{align*}
D_{k,l} = \mathcal M^{lm+k}_{3-l} \; \; \textrm{for} \; \; l=1,2,3.
\end{align*}
From the formula (\ref{e2}) in Definition \ref{d.1}, it is deduced that
$D_{k, l}\in \C$ depends on $a_{i,j}$ with $(i,j) \in \Upsilon^{lm+k}_{3-l}$ and only on the coefficients $f_l$ of $f(x_1)$ with $l<m$.

We first prove (\ref{eq:16}). Using (\ref{e9}) in Lemma \ref{l} and $g^{\pm}_{2m+k} = 0$, we get
\begin{align} \label{e4}
&{\sum_{m<l\le 2m+k}\frac{f^{\pm}_{l}}{l!}}2\mathcal M^{{2m-l}+k}_2+\frac{3a_{k,3}}{k!3!}\frac{{(f^{\pm}_m)}^2}{{(m!)}^2}
+\frac{{f^{\pm}_m}}{m!}2D_{k,1} + D_{k,2}= 0.
\end{align}
Note that $2m-l<m$ when $m<l<2m$.
The relation (\ref{eq:16}) then follows from
 Lemma \ref{l1} and (\ref{e4}).

Now we need to prove (\ref{eq:15}). Recalling (\ref{e13}) in Lemma \ref{l} and $h^{\pm}_{3m+k} = 0$, we have 
\begin{align*}
0=&\sum_{ m<l\le 3m+k}\frac{f^{\pm}_lg^{\pm}_{3m-l+k}}{l!(3m-l+k)!}- \sum_{m<l\le \frac{3m+k}2}\left(\frac{f^{\pm}_l}{l!}\right)^2  \mathcal M^{3m-2l+k}_{2}  +\frac{{(f^{\pm}_m)}^3}{{(m!)}^3}\frac{a_{k,3}}{k!3!}\\
&+\frac{{(f^{\pm}_m)}^2}{{(m!)}^2}D_{k,1}+\frac{f^{\pm}_{m}}{m!}D_{k,2}
+D_{k,3}. \notag
\end{align*}
{Note that $g^{\pm}_{3m-l+k}$ denotes the $(3m-l+k)$th coefficient of $g(x_1)$ defined by (\ref{g}).} Further, it is easily seen that $3m-2l<m$ when $l>m$. Thus, using the results of Lemma \ref{l1} together with $g^{\pm}_{3m-l+k}=0$,  it follows that (\ref{eq:15}) holds.
The proof is thus complete.
 \end{proof}

\section{Acknowledgments}
The work of L. Li and J. Yang are partially supported by the National Science Foundation of
China (11961141007,61520106004) and Microsoft Research of Asia.
The work of G. Hu is supported by the National Natural Science Foundation of China
 (No. 12071236) and the Fundamental Research Funds for Central Universities in China (No. 63213025).

\end{document}